\newtheorem{proposition}{Proposition}
\newtheorem{definition}{Definition}
\newtheorem{theorem}{Theorem}
\newtheorem{lemma}{Lemma}
\newtheorem{corollary}{Corollary}
\newtheorem{remark}{Remark}
\title{\textbf{Slitherlink on Triangular Grids}}
\author{Charles Gong}
\date{}
\begin{document}

\maketitle

\begin{abstract}
Let $G$ be a plane graph and let $C$ be a cycle in $G$. For each finite face of $G$, count the number of edges of $C$ the face contains. We call this the Slitherlink signature of $C$. The symmetric difference $A$ of two cycles with the same signature is totally even, meaning every vertex is incident to an even number of edges in $A$ and every face contains an even number of edges in $A$. In this paper, we completely characterize totally even subsets in the triangular grid, and count the number of edges in any totally even subset of the triangular grid. We also show that the size of the symmetric difference of two cycles with the same signature in the triangular grid is divisible by $12$; this is best possible since 12 is the greatest common divisor of all the sizes of the symmetric difference between two cycles with the same signature in a triangular grid. 
\end{abstract}

\begin{center}
\section{Introduction}
\end{center}

\begin{definition}
Let $T_n$ denote the equilateral triangular grid, viewed as a graph, with three sides each containing $n$ edges. So $T_n$ is a plane graph that contains $\binom{n+2}{2}$ vertices and $3\binom{n+1}{2}$ edges.
\end{definition}

As an example, $T_3$ is given below.  \\

\begin{center}
\begin{tikzpicture}

\foreach \y in {0,1, ..., 2}
\foreach \x in {0, ..., \numexpr 2-\y} 
{
\draw ({\x + (\y / 2)}, {sqrt(3) * \y / 2}) -- ({\x + (\y / 2) + 1}, {sqrt(3) * \y / 2}) ;
\draw ({\x+(\y/2)},{sqrt(3) * \y / 2}) -- ({\x+(\y/2)+(1/2)},{(\y +1) * sqrt(3) / 2 }) ;
\draw ({\x + (\y / 2) + 1}, {sqrt(3) * \y / 2}) -- ({\x+(\y/2)+(1/2)},{(\y +1) * sqrt(3) / 2 }) ;
}

\foreach \y in {0,1,2,3} 
\foreach \x in {0,...,\numexpr 3 - \y}
{
\filldraw[black] ({\x + (\y / 2)},{sqrt(3) * \y / 2}) circle (2pt) ;
}

\end{tikzpicture}
\end{center}

In this paper, $G = (V,E)$ will always denote a finite and connected plane graph where every edge is a part of a cycle and lies in between two distinct faces (we consider the exterior to be a face, which we call the ``outside" or ``infinite" face). 

\begin{definition}[Beluhov \cite{bel}]
Consider a plane graph $G$ and a subset of the edges of $G$ that form a cycle, and label each \textit{finite} face (we ignore the infinite face/outside face) with the number of edges of that face appearing in the cycle. This we call the \textbf{Slitherlink signature} of the cycle. 
\end{definition}

\begin{definition}[Beluhov \cite{bel}]
Given a Slitherlink signature $S$, we define the \textbf{multiplicity} of $S$ to be the number of cycles with Slitherlink signature $S$.
\end{definition} 

For example, below is a Slitherlink signature on $T_5$ with multiplicity at least $2$. \\

\begin{center}
\begin{tikzpicture}[declare function={a=0.75;}]
\foreach \y in {0,...,5} 
\foreach \x in {0,...,\numexpr 5 - \y}
{
\draw ({(\x + (\y / 2))*a},{(sqrt(3) * \y / 2)*a}) coordinate (\x\y) ;
\filldraw[black] ({(\x + (\y / 2))*a},{(sqrt(3) * \y / 2)*a}) circle (2pt) ;
}

\draw (00) -- (20) ;
\draw (20) -- (21) ;
\draw (21) -- (31) ;
\draw (31) -- (40) ;
\draw (40) -- (50) ;
\draw (32) -- (50) ;
\draw (32) -- (22) ;
\draw (22) -- (13) ;
\draw (13) -- (14) ;
\draw (14) -- (05) ;
\draw (05) -- (03) ;
\draw (03) -- (12) ;
\draw (12) -- (11) ;
\draw (11) -- (01) ;
\draw (01) -- (00) ;

\draw (barycentric cs:00=1,01=1,10=1) node{\small 2} ;
\draw (barycentric cs:01=1,11=1,10=1) node{\small 1} ;
\draw (barycentric cs:20=1,11=1,10=1) node{\small 1} ;
\draw (barycentric cs:20=1,11=1,21=1) node{\small 1} ;
\draw (barycentric cs:20=1,30=1,21=1) node{\small 1} ;
\draw (barycentric cs:31=1,30=1,21=1) node{\small 1} ;
\draw (barycentric cs:31=1,30=1,40=1) node{\small 1} ;
\draw (barycentric cs:31=1,41=1,40=1) node{\small 1} ;
\draw (barycentric cs:50=1,41=1,40=1) node{\small 2} ;
\draw (barycentric cs:01=1,02=1,11=1) node{\small 1} ;
\draw (barycentric cs:12=1,02=1,11=1) node{\small 1} ;
\draw (barycentric cs:12=1,21=1,11=1) node{\small 1} ;
\draw (barycentric cs:12=1,21=1,22=1) node{\small 0} ;
\draw (barycentric cs:31=1,21=1,22=1) node{\small 1} ;
\draw (barycentric cs:31=1,32=1,22=1) node{\small 1} ;
\draw (barycentric cs:31=1,32=1,41=1) node{\small 1} ;
\draw (barycentric cs:02=1,03=1,12=1) node{\small 1} ;
\draw (barycentric cs:13=1,03=1,12=1) node{\small 1} ;
\draw (barycentric cs:13=1,22=1,12=1) node{\small 1} ;
\draw (barycentric cs:13=1,22=1,23=1) node{\small 1} ;
\draw (barycentric cs:32=1,22=1,23=1) node{\small 1} ;
\draw (barycentric cs:03=1,04=1,13=1) node{\small 1} ;
\draw (barycentric cs:14=1,04=1,13=1) node{\small 1} ;
\draw (barycentric cs:14=1,23=1,13=1) node{\small 1} ;
\draw (barycentric cs:04=1,05=1,14=1) node{\small 2} ;

\foreach \y in {0,...,5} 
\foreach \x in {0,...,\numexpr 5 - \y}
{
\draw ({(\x + (\y / 2))*a+7*a},{(sqrt(3) * \y / 2)*a}) coordinate (\x\y a) ;
\filldraw[black] ({(\x + (\y / 2))*a + 7*a},{(sqrt(3) * \y / 2)*a}) circle (2pt) ;
}

\draw (00a) -- (02a) ;
\draw (02a) -- (12a) ;
\draw (12a) -- (13a) ;
\draw (13a) -- (04a) ;
\draw (04a) -- (05a) ;
\draw (05a) -- (23a) ;
\draw (23a) -- (22a) ;
\draw (22a) -- (31a) ;
\draw (31a) -- (41a) ;
\draw (41a) -- (50a) ;
\draw (50a) -- (30a) ;
\draw (30a) -- (21a) ;
\draw (21a) -- (11a) ;
\draw (11a) -- (10a) ;
\draw (10a) -- (00a) ;

\draw (barycentric cs:00a=1,01a=1,10a=1) node{\small 2} ;
\draw (barycentric cs:01a=1,11a=1,10a=1) node{\small 1} ;
\draw (barycentric cs:20a=1,11a=1,10a=1) node{\small 1} ;
\draw (barycentric cs:20a=1,11a=1,21a=1) node{\small 1} ;
\draw (barycentric cs:20a=1,30a=1,21a=1) node{\small 1} ;
\draw (barycentric cs:31a=1,30a=1,21a=1) node{\small 1} ;
\draw (barycentric cs:31a=1,30a=1,40a=1) node{\small 1} ;
\draw (barycentric cs:31a=1,41a=1,40a=1) node{\small 1} ;
\draw (barycentric cs:50a=1,41a=1,40a=1) node{\small 2} ;
\draw (barycentric cs:01a=1,02a=1,11a=1) node{\small 1} ;
\draw (barycentric cs:12a=1,02a=1,11a=1) node{\small 1} ;
\draw (barycentric cs:12a=1,21a=1,11a=1) node{\small 1} ;
\draw (barycentric cs:12a=1,21a=1,22a=1) node{\small 0} ;
\draw (barycentric cs:31a=1,21a=1,22a=1) node{\small 1} ;
\draw (barycentric cs:31a=1,32a=1,22a=1) node{\small 1} ;
\draw (barycentric cs:31a=1,32a=1,41a=1) node{\small 1} ;
\draw (barycentric cs:02a=1,03a=1,12a=1) node{\small 1} ;
\draw (barycentric cs:13a=1,03a=1,12a=1) node{\small 1} ;
\draw (barycentric cs:13a=1,22a=1,12a=1) node{\small 1} ;
\draw (barycentric cs:13a=1,22a=1,23a=1) node{\small 1} ;
\draw (barycentric cs:32a=1,22a=1,23a=1) node{\small 1} ;
\draw (barycentric cs:03a=1,04a=1,13a=1) node{\small 1} ;
\draw (barycentric cs:14a=1,04a=1,13a=1) node{\small 1} ;
\draw (barycentric cs:14a=1,23a=1,13a=1) node{\small 1} ;
\draw (barycentric cs:04a=1,05a=1,14a=1) node{\small 2} ;

\end{tikzpicture} \\

\bigskip 

Figure 1
\end{center}

Let $\mathbb{Z}^{+}$ denote the set of positive integers. We label the vertex in the $x$-th row from the bottom and $y$-th position from left to right by $(x,y) \in \mathbb{Z}^{+} \times \mathbb{Z}^{+}$. As an example, the labelling of the vertex set of $T_3$ is given below. \\

\begin{center}
\begin{tikzpicture}
\foreach \y in {0,1,2,3} 
\foreach \x in {0,...,\numexpr 3 - \y}
{
\filldraw[black] ({\x + (\y / 2)},{sqrt(3) * \y / 2}) circle (2pt) ;
\draw ({\x + (\y / 2)},{sqrt(3) * \y / 2}) node[yshift=-0.4cm]{\small (\pgfmathparse{\x + 1} \pgfmathprintnumber{\pgfmathresult}, \pgfmathparse{\y + 1} \pgfmathprintnumber{\pgfmathresult})} ;
}

\draw (1.5,-1.25) node{Figure 2} ;
\end{tikzpicture}
\end{center}

If you plot the points of $T_n$ with labels $(x,y) \in V(T_n) = \{(x,y) \in  \mathbb{Z}^{+} \times \mathbb{Z}^{+} : 1 \leq y \leq n, 1 \leq x \leq n+2-y \}$ in the Cartesian plane, you get an isosceles right triangle instead of an equilateral triangle. The reason why we chose an embedding of the triangular grid as an equilateral triangle is because some of our results are about symmetries that can only be seen when we embed the triangular grid as an equilateral triangle. It will also be useful for us to consider points outside of $T_n$ embedded in the plane. In particular, we care about points of the form $(x,0)$ for integer $2 \leq x \leq n$ and $(0,y)$ for integer $2 \leq y \leq n$ and $(n+3-y,y)$ for integer $2 \leq y \leq n$ in the coordinate system shown in Figure 2. See the picture of the vertex set of $T_3$ below, along with the points surrounding it. 

\begin{center}
\begin{tikzpicture}
\foreach \y in {0,1,2,3} 
\foreach \x in {0,...,\numexpr 3 - \y}
{
\filldraw[black] ({\x + (\y / 2)},{sqrt(3) * \y / 2}) circle (2pt) ;
\draw ({\x + (\y / 2)},{sqrt(3) * \y / 2}) node[yshift=-0.4cm]{\small (\pgfmathparse{\x + 1} \pgfmathprintnumber{\pgfmathresult}, \pgfmathparse{\y + 1} \pgfmathprintnumber{\pgfmathresult})} ;
}

\foreach \x in {1,2,3}
{
\draw ({\x + (-1 / 2)},{sqrt(3) * (-1) / 2}) circle (2pt) ;
\draw ({\x + (-1 / 2)},{sqrt(3) * (-1) / 2}) node[yshift=-0.4cm]{\small (\pgfmathparse{\x + 1} \pgfmathprintnumber{\pgfmathresult}, 0)} ;
}

\foreach \y in {1,2,3}
{
\draw ({-1 + (\y / 2)},{sqrt(3) * \y / 2}) circle (2pt) ;
\draw ({-1 + (\y / 2)},{sqrt(3) * \y / 2}) node[yshift=-0.4cm]{\small (\pgfmathparse{-1 + 1} \pgfmathprintnumber{\pgfmathresult}, \pgfmathparse{\y + 1} \pgfmathprintnumber{\pgfmathresult})} ;
}

\foreach \y in {1,2,3}
{
\draw ({4-\y + (\y / 2)},{sqrt(3) * \y / 2}) circle (2pt) ;
\draw ({4-\y  + (\y / 2)},{sqrt(3) * \y / 2}) node[yshift=-0.4cm]{\small (\pgfmathparse{4-\y  + 1} \pgfmathprintnumber{\pgfmathresult}, \pgfmathparse{\y + 1} \pgfmathprintnumber{\pgfmathresult})} ;
}
\draw (1.5,-2) node{Figure 3} ;
\end{tikzpicture}
\end{center}

\begin{definition} 
In $T_n$, we say the edges $\{(i,1),(i+1,1)\}$ for $i \in [n]$ and the vertices $(i,1)$ for $i \in [n+1]$ form the \textbf{bottom side} of $T_n$. Likewise, the edges $\{(1,j),(1,j+1)\}$ for $j \in [n]$ and the vertices $(1,j)$ for $j \in [n+1]$ form the \textbf{left side} of $T_n$, and the edges $\{(n+2-j,j),(n+1-j,j+1)\}$ for $j \in [n]$ and the vertices $(n+2-j,j)$ for $j \in [n+1]$ form the \textbf{right side} of $T_n$. 
\end{definition}

\begin{definition}[Beluhov \cite{bel}]
A \textbf{totally even} subset $A \subseteq E$ of a plane graph $G = (V,E)$ is a subset of edges such that for all $v \in V$, we have $v$ is incident to an even number of edges in $A$, and all faces in $G$ contain an even number of edges in $A$.
\end{definition}

 If $A,B \subseteq E$, we let $A \triangle B$ denote the symmetric difference of $A$ and $B$. We claim that given the edge sets $C_1, C_2 \subseteq E$ of two cycles with the same signature in $G$, we have $C_1 \triangle C_2$ is a totally even subset. Note if $|A|$ and $|B|$ are both even or both odd, then $|A \triangle B|$ is also even. Thus for every vertex $v \in V$, we know $v$ is incident to an even number of edges of $C_1$ and an even number of edges of $C_2$, namely $0$ or $2$ edges, and so $v$ is incident to an even number of edges in $C_1 \triangle C_2$. Furthermore for every face $f$ in $G$, since $C_1$ and $C_2$ have the same signature, $f$ contains the same number of edges in $C_1$ as in $C_2$, and therefore contains an even number of edges of $C_1 \triangle C_2$. Hence, $C_1 \triangle C_2$ is totally even. \\

In 2023 \cite{bel}, Beluhov studied Slitherlink signatures on $m$ by $n$ grids with square cells (call this a rectangular grid, and call it a square grid if $m = n$), and considered under what conditions on $m$ and $n$ is there a signature in the $m$ by $n$ grid with multiplicity greater than or equal to two. Hee also studied what multiplicities are possible for signatures on the $m$ by $n$ grid \cite{bel}. One result in particular he obtained was that an $m$ by $n$ grid with square cells has two distinct cycles with the same signature if and only if (1) $m, n \geq 3$, (2) $\gcd(m,n) \geq 2$, and (3) if $\gcd(m,n) = 2$ then either $4 \mid m$ or $4 \mid n$. Beluhov obtained various results leading up to this big theorem, and we have analogues of those results for triangular grids. Beluhov showed that totally even subsets of square grids had a four-fold symmetry (symmetric about the diagonals); we will show totally even subsets of triangular grids have a six-fold symmetry. Beluhov showed that the edge of a totally even subset on a side of a rectangular grid completely determines the totally even subset; we will show this for triangular grids. Beluhov completely characterized totally even subsets for rectangular grids; we will completely characterize totally even subsets of $T_n$. Beluhov counted the number of edges of any totally even subset of a rectangular grid; we will count the number of edges of any totally even subset of a triangular grid. Beluhov showed that the greatest common divisor of the sizes of the symmetric difference between two cycles with the same signature on a rectangular grid is 8; we will show this is $12$ for triangular grids. \\

There were many results Beluhov proved for rectangular grids that we wished to prove an analogue for triangular grids but couldn't. In particular, Beluhov obtained results about general constructions of pairs of totally even subsets in the $m$ by $n$ grid that worked for $m$ and $n$ satisfying (1) $m, n \geq 3$, (2) $\gcd(m,n) \geq 2$, and (3) if $\gcd(m,n) = 2$ then either $4 \mid m$ or $4 \mid n$. He also obtained a result that constructed pairs of distinct cycles with the same signature in the $am$ by $bn$ grid from a pair of distinct cycles with the same signature in the $m$ by $n$ grid. We were not able to prove or find any analogues of these results (we don't have any results about general constructions). 

\section{Complete Characterization of the Totally Even Subsets of $T_n$}
Let $G = (V,E)$ be a plane graph and $A \subseteq E$,  and let $\mathbbm{1}_A : E \rightarrow \mathbb{F}_2$ denote the indicator vector of $A$, that is, $\mathbbm{1}_A (e) = 1$ if $e \in A$ and $\mathbbm{1}_A (e) = 0$ if $e \not\in A$. Recall that if $|A|,|B|$ are both even, then $|A \triangle B|$ is even. Thus if $E_1,E_2 \subseteq E$ are both totally even, then $E_1 \triangle E_2$ is also totally even, since each vertex $v \in V$ is incident to an even number of edges in $E_1$ and an even number of edges in $E_2$, and each face $f$ in $G$ contains an even number of edges in $E_1$ and an even number of edges in $E_2$. In particular, this means that $\{ \mathbbm{1}_A \in \mathbb{F}_2^E : A \text{ is totally even}\}$ forms a subspace of $\mathbb{F}_2^E$, where $\mathbb{F}_2^E$ denotes the set of functions/vectors with domain $E$ and codomain $\mathbb{F}_2$. Henceforth, we will often equate a totally even subset with its indicator vector, so we might say ``these totally even subsets are linearly independent," but we are really referring to the indicator vectors of the totally even subsets in $\mathbb{F}_2^E$. \\

We first give three examples of totally even subsets of $T_6$. 

\begin{center}
\begin{tikzpicture}[declare function={a=0.5;}]
\foreach \y in {0,...,6} 
\foreach \x in {0,...,\numexpr 6 - \y}
{
\draw ({(\x + (\y / 2))*a},{(sqrt(3) * \y / 2)*a}) coordinate (\x\y) ;
\filldraw[black] ({(\x + (\y / 2))*a},{(sqrt(3) * \y / 2)*a}) circle (2pt) ;
}

\draw (00) -- (10) ;
\draw (10) -- (11) ;
\draw (11) -- (01) ;
\draw (01) -- (00) ;
\draw (02) -- (11) ;
\draw (02) -- (12) ;
\draw (12) -- (03) ;
\draw (03) -- (13) ;
\draw (13) -- (04) ;
\draw (04) -- (14) ;
\draw (14) -- (05) ;
\draw (05) -- (06) ;
\draw (06) -- (15) ;
\draw (15) -- (14) ;
\draw (14) -- (24) ;
\draw (24) -- (23) ;
\draw (23) -- (33) ;
\draw (33) -- (32) ;
\draw (32) -- (42) ;
\draw (42) -- (41) ;
\draw (41) -- (51) ;
\draw (51) -- (60) ;
\draw (60) -- (50) ;
\draw (50) -- (41) ;
\draw (41) -- (40) ;
\draw (40) -- (31) ;
\draw (31) -- (30) ;
\draw (30) -- (21) ;
\draw (21) -- (20) ;
\draw (20) -- (11) ;

\foreach \y in {0,...,6} 
\foreach \x in {0,...,\numexpr 6 - \y}
{
\draw ({(\x + (\y / 2))*a+8*a},{(sqrt(3) * \y / 2)*a}) coordinate (\x\y a) ;
\filldraw[black] ({(\x + (\y / 2))*a + 8*a},{(sqrt(3) * \y / 2)*a}) circle (2pt) ;
}

\draw (10a) -- (20a) ;
\draw (10a) -- (11a) ;
\draw (11a) -- (01a) ;
\draw (01a) -- (02a) ;
\draw (02a) -- (12a) ;
\draw (11a) -- (12a) ;
\draw (11a) -- (21a) ;
\draw (21a) -- (20a) ;
\draw (12a) -- (22a) ;
\draw (22a) -- (21a) ;
\draw (21a) -- (30a) ;
\draw (30a) -- (31a) ;
\draw (31a) -- (40a) ;
\draw (40a) -- (50a) ;
\draw (50a) -- (41a) ;
\draw (41a) -- (31a) ;
\draw (22a) -- (31a) ;
\draw (41a) -- (51a) ;
\draw (51a) -- (42a) ;
\draw (42a) -- (32a) ;
\draw (32a) -- (41a) ;
\draw (22a) -- (32a) ;
\draw (12a) -- (03a) ;
\draw (03a) -- (13a) ;
\draw (13a) -- (04a) ;
\draw (04a) -- (05a) ;
\draw (05a) -- (14a) ;
\draw (14a) -- (13a) ;
\draw (14a) -- (15a) ;
\draw (15a) -- (24a) ;
\draw (24a) -- (23a) ;
\draw (23a) -- (14a) ;
\draw (13a) -- (22a) ;
\draw (22a) -- (23a) ;
\draw(23a) -- (33a) ;
\draw (33a) -- (32a) ;

\foreach \y in {0,...,6} 
\foreach \x in {0,...,\numexpr 6 - \y}
{
\draw ({(\x + (\y / 2))*a+16*a},{(sqrt(3) * \y / 2)*a}) coordinate (\x\y b) ;
\filldraw[black] ({(\x + (\y / 2))*a + 16*a},{(sqrt(3) * \y / 2)*a}) circle (2pt) ;
}
\draw (20b) -- (30b) ;
\draw (30b) -- (40b) ;
\draw (40b) -- (31b) ;
\draw (31b) -- (41b) ;
\draw (41b) -- (32b) ;
\draw (32b) -- (42b) ;
\draw (42b) -- (33b) ;
\draw (33b) -- (24b) ;
\draw (24b) -- (23b) ;
\draw (23b) -- (14b) ;
\draw (14b) -- (13b) ;
\draw (13b) -- (04b) ;
\draw (04b) -- (03b) ;
\draw (03b) -- (02b) ;
\draw (02b) -- (12b) ;
\draw (12b) -- (11b) ;
\draw (11b) -- (21b) ;
\draw (21b) -- (20b) ;

\draw (11*a, -1) node{\normalsize Figure 4} ;

\end{tikzpicture}
\end{center}

We claim that these three totally even subsets form a basis for all totally even subsets of $T_6$, so that in $T_6$ there are $2^3 = 8$ distinct totally even subsets. In general, $T_n$ will have $2^{\lfloor \frac{n}{2} \rfloor}$ distinct totally even subsets, which we will prove in a series of theorems below. First note that in Figure 4, all three of the totally even subsets are symmetric about the line containing the vertices $(2,5)$ and $(1,7)$, and thus since they form a basis all totally even subsets of $T_6$ are symmetric symmetric about the line containing the vertices $(2,5)$ and $((1,7)$. In general, all totally even subsets of $T_n$ are symmetric about the line containing the vertices $(1,n+1)$ and $(2,n-1)$: we will refer to this line as \textbf{the middle}. In the case $n = 1$ the middle is the line containing the vertex $(1,2)$ and the midpoint of the two vertices $(1,1)$ and $(2,1)$. \\

\begin{definition}
In $T_n$, the \textbf{middle} is the line containing the vertex $(1,n+1)$ and the midpoint of the two vertices $(1,n)$ and $(2,n)$.
\end{definition}

\begin{theorem}
Any totally even subset $A$ of $T_n$ is symmetric about the middle, that is, 

$$\{(x_1,y_1),(x_2,y_2)\} \in A$$

\noindent
if and only if 

$$\{(n+3-y_1-x_1,y_1),(n+3-y_2-x_2,y_2)\} \in A.$$
\end{theorem}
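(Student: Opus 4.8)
The map in the statement is the reflection $\sigma\colon(x,y)\mapsto(n+3-y-x,\,y)$; it fixes every row index, reverses the positions of the vertices within each row, and is an automorphism of $T_n$ as a plane graph (it is the Euclidean reflection across the middle). Hence $\sigma$ carries totally even subsets to totally even subsets, and the claim is that every totally even $A$ is a fixed point, i.e. $A\triangle\sigma(A)=\varnothing$. The plan is to dismantle $T_n$ one horizontal row of vertices at a time, working downward from the apex $(1,n+1)$, and to prove by induction on the number of rows handled that the part of $A$ lying in those rows is already symmetric about the middle.

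For $1\le k\le n+1$ let $E_{\ge k}$ denote the set of edges both of whose endpoints lie in rows $k,k+1,\dots,n+1$; since $\sigma$ preserves rows, $\sigma(E_{\ge k})=E_{\ge k}$, so ``$A\cap E_{\ge k}$ is symmetric about the middle'' is a meaningful statement. The induction starts trivially at $k=n+1$ (no edges), and the first step already shows the mechanism: the apex has degree two with its two edges interchanged by $\sigma$, so evenness there makes them both present or both absent, and the topmost triangle then forces the lone horizontal edge of row $n$ to be absent --- and that edge is $\sigma$-fixed. For the general step, assume $A\cap E_{\ge k+1}$ is symmetric. The edges of $E_{\ge k}$ not yet seen are the diagonals joining rows $k$ and $k+1$ together with the horizontal edges of row $k$. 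The even-degree condition at each vertex of row $k+1$ relates its new diagonal edges to edges of $E_{\ge k+1}$ (already known and symmetric), and the even-face conditions on the band of triangular faces between rows $k$ and $k+1$ are linear relations among the new edges together with the already-known horizontals of row $k+1$. Because $\sigma$ sends row-$(k+1)$ vertices to row-$(k+1)$ vertices and faces of this band to faces of this band, the entire linear system for the new edges is $\sigma$-equivariant with symmetric data; pushing the induction down to $k=1$ would give that all of $A$ is symmetric.

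The obstacle --- which I expect to be the real content --- is that this system does not pin down the new edges uniquely: already at the apex the computation shows that each band carries roughly one free parameter (morally the next edge along a side of $T_n$), so one cannot simply invoke ``a symmetric system with symmetric data has a symmetric solution.'' One must show that \emph{every} valid completion of $A\cap E_{\ge k+1}$ to $A\cap E_{\ge k}$ is symmetric, i.e. that the direction of ambiguity in each band is itself symmetric about the middle, and this is delicate precisely at the rows whose center is an actual vertex of $T_n$: those vertices lie on the middle and are $\sigma$-fixed, and the local pattern of edges around the axis there is not left--right symmetric. The cleanest way I see to resolve this is to reformulate, via the general fact (worth recording as a lemma) that a set $A$ of edges of a plane graph is totally even if and only if $A=\delta(S)$ for some set of vertices $S$ --- where $\delta(S)$ is the set of edges with exactly one endpoint in $S$ --- and at the same time every vertex has even degree in $A$; equivalently, $A$ lies in both the cut space and the cycle space. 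Since $\delta(S)=\delta(S')$ iff $S'\in\{S,S^c\}$, and since $S\triangle\sigma(S)$ is $\sigma$-invariant and contains no vertex on the middle, the theorem reduces to the following statement, which has no awkward self-reference and yields to a direct layer-by-layer peeling of the vertex set: if $S\subseteq V(T_n)$ is invariant under $\sigma$, contains no vertex lying on the middle, and $\delta(S)$ has all degrees even, then $S=\varnothing$.
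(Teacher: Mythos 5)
Your reformulation is attractive and the reduction itself is sound: for a connected plane graph the face condition says exactly that $\mathbbm{1}_A$ is orthogonal to every face boundary, hence that $A=\delta(S)$ lies in the cut space, and setting $T=S\triangle\sigma(S)$ one checks that $T$ is $\sigma$-invariant, contains no $\sigma$-fixed vertex (fixed vertices lie in $S$ iff they lie in $\sigma(S)$), and has $\delta(T)=A\triangle\sigma(A)$ totally even; since the apex is $\sigma$-fixed, $T\neq V$, so your final claim would indeed give $\delta(T)=\varnothing$ and hence the theorem. The problem is that the final claim is exactly where the content of the theorem lives, and you do not prove it --- you assert that it ``yields to a direct layer-by-layer peeling of the vertex set.'' It does not obviously do so. (A small bonus of your setup, worth noting: every vertex of $T_n$ has even degree, so ``$\delta(S)$ has all degrees even'' is equivalent to the clean linear condition that every vertex has an even number of neighbours in $S$, independently of whether that vertex lies in $S$.) But if you actually run the peeling from the apex downward, the same indeterminacy you flagged in your first attempt reappears in vertex form: rows without a $\sigma$-fixed vertex split into several $\sigma$-orbits, and the parity constraints coming from the vertices of the rows already processed determine only certain sums of the orbit indicators, not the indicators themselves (already at the fourth row from the top one gets a relation of the form $\alpha+\beta=\epsilon$ with $\alpha,\beta$ individually free at that stage). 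So the hard step has been relocated, not resolved, and as written the proof is incomplete.

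For comparison, the paper's proof is the direct top-down induction you first sketched, and the reason it closes is an ordering plus one non-obvious move that your sketch lacks: within each new layer one starts at the middle and works outward, and the two edges descending from the previous layer into a middle vertex are shown to agree by a short contradiction argument (assume exactly one is in $A$, then the face containing both forces its third edge into $A$, and propagating back up violates the symmetry already established one layer above). That single step kills the apparent free parameter in each band; the remaining edges are then determined one at a time because at each stage some vertex or face has exactly one undetermined edge. If you want to salvage your route, you need to supply the analogous mechanism for the vertex version: an argument pinning down the individual $\sigma$-orbits in each row, not just their sums, using the constraints from vertices two rows up and the excluded fixed vertices. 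Until that is done, the proposal has a genuine gap.
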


\begin{proof}
Let $A$ be a totally even subset of the edges of $T_n$. We first show the top two edges $\{(1,n+1),(1,n))\}$ and $\{(1,n+1),(2,n)\}$ are symmetric about the middle and then work our way downwards. In the pictures below, filled in lines refer to edges we've established to be symmetric about the middle, and dashed lines refer to edges we're currently trying to establish to be symmetric about the middle. 

\begin{center}
\begin{tikzpicture}[declare function={a=0.75;}]
\foreach \y in {0,...,1} 
\foreach \x in {0,...,\numexpr 1 - \y}
{
\draw ({(\x + (\y / 2))*a},{(sqrt(3) * \y / 2)*a}) coordinate (\x\y) ;
\filldraw[black] ({(\x + (\y / 2))*a},{(sqrt(3) * \y / 2)*a}) circle (2pt) ;
}

\draw[dashed] (00) -- (01) ;
\draw[dashed] (01) -- (10) ;

\draw (01) node[yshift=0.4cm]{ $(1,n+1)$} ;
\draw (00) node[xshift=-0.7cm]{$(1,n)$} ;
\draw (10) node[xshift=0.7cm]{$(2,n)$} ;
\end{tikzpicture}
\end{center}

Edges $\{(1,n+1),(1,n))\}$ and $\{(1,n+1),(2,n)\}$ must either both be in $A$ or both not be in $A$. This is because if one is in $A$ and the other isn't, this would imply vertex $(1,n+1)$ is incident to $1$ edge of $A$, and $1$ is odd. Now we consider the edge $\{(1,n),(2,n)\}$. \\

\begin{center}
\begin{tikzpicture}[declare function={a=0.75;}]
\foreach \y in {0,...,1} 
\foreach \x in {0,...,\numexpr 1 - \y}
{
\draw ({(\x + (\y / 2))*a},{(sqrt(3) * \y / 2)*a}) coordinate (\x\y) ;
\filldraw[black] ({(\x + (\y / 2))*a},{(sqrt(3) * \y / 2)*a}) circle (2pt) ;
}

\draw (00) -- (01) ;
\draw (01) -- (10) ;
\draw[dashed] (00) -- (10) ;

\draw (00) node[xshift=-0.7cm]{$(1,n)$} ;
\draw (10) node[xshift=0.7cm]{$(2,n)$} ;
\end{tikzpicture}
\end{center}

\noindent
Since the edge $\{(1,n),(2,n)\}$ lies on the middle, it is symmetric about the middle (in fact, we will show in a corollary no edges that lie on the middle can be in $A$). We call the argument made in this paragraph \textbf{argument 1}. \\

Now we consider the two edges $\{(1,n),(2,n-1)\}$ and $\{(2,n),(2,n-1)\}$. \\

\begin{center}
\begin{tikzpicture}[declare function={a=0.75;}]
\foreach \y in {0,...,2} 
\foreach \x in {0,...,\numexpr 2 - \y}
{
\draw ({(\x + (\y / 2))*a},{(sqrt(3) * \y / 2)*a}) coordinate (\x\y) ;
\filldraw[black] ({(\x + (\y / 2))*a},{(sqrt(3) * \y / 2)*a}) circle (2pt) ;
}

\draw (01) -- (02) ;
\draw (02) -- (11) ;
\draw (01) -- (11) ;
\draw[dashed] (01) -- (10) ;
\draw[dashed] (11) -- (10) ;

\draw (01) node[xshift=-0.7cm]{$(1,n)$} ;
\draw (11) node[xshift=0.7cm]{$(2,n)$} ;
\draw (10) node[yshift=-0.4cm]{$(2,n-1)$} ;

\end{tikzpicture}
\end{center}

\noindent
Without loss of generality, assume for the sake of contradiction $\{(1,n),(2,n-1)\} \in A$ and $\{(2,n),(2,n-1)\} \not \in A$. Since all faces in $T_n$ either contain $2$ edges in $A$ or $0$ edges in $A$, and since we know the face with vertices $(1,n),(2,n),(2,n-1)$ has at least $1$ edge of $A$, it must have $2$ edges of $A$. Thus, $\{(1,n),(2,n)\} \in A$. By the same reasoning, we now know the face with vertices $(1,n+1),(1,n),(2,n)$ has two edges in $A$, and therefore exactly one of $\{(1,n+1),(1,n))\}, \{(1,n+1),(2,n)\}$ is in $A$, contradicting what we established before. So the two edges $\{(1,n),(2,n-1)\}$ and $\{(2,n),(2,n-1)\}$ are symmetric about the middle. We call the argument made in this paragraph \textbf{argument 2}. \\

We now consider the edges $\{(1,n),(1,n-1)\}$ and $\{(2,n),(3,n-1)\}$. \\

\begin{center}
\begin{tikzpicture}[declare function={a=0.75;}]
\foreach \y in {0,...,2} 
\foreach \x in {0,...,\numexpr 2 - \y}
{
\draw ({(\x + (\y / 2))*a},{(sqrt(3) * \y / 2)*a}) coordinate (\x\y) ;
\filldraw[black] ({(\x + (\y / 2))*a},{(sqrt(3) * \y / 2)*a}) circle (2pt) ;
}

\draw (01) -- (02) ;
\draw (02) -- (11) ;
\draw (01) -- (11) ;
\draw (01) -- (10) ;
\draw (11) -- (10) ;
\draw[dashed] (00) -- (01) ;
\draw[dashed] (20) -- (11) ;

\draw (01) node[xshift=-0.7cm]{$(1,n)$} ;
\draw (11) node[xshift=0.7cm]{$(2,n)$} ;
\draw (00) node[xshift=-0.9cm]{$(1,n-1)$} ;
\draw (20) node[xshift=0.9cm]{$(3,n-1)$} ;

\end{tikzpicture}
\end{center}

\noindent
Note if we know whether the three edges $\{(1,n),(1,n+1)\}$, $\{(1,n),(2,n)\}$, $\{(1,n),(2,n-1)\}$ are in $A$, then we know whether $\{(1,n),(1,n-1)\}$ is in $A$ because vertex $(1,n)$ must be incident to an even number of edges in $A$. Likewise, if we know whether the three edges $\{(2,n),(1,n+1)\}$, $\{(2,n),(1,n)\}$, $\{(2,n),(2,n-1)\}$ are in $A$, then we know whether $\{(2,n),(3,n-1)\}$ is in $A$ because vertex $(2,n)$ must be incident to an even number of edges in $A$. In general, if we know whether all of the edges incident to a vertex $v$ are in $A$ except one of the edges $e$, then can also determine whether $e \in A$. By the previously established symmetries, this means $\{(1,n),(1,n-1)\}$ and $\{(2,n),(3,n-1)\}$ are symmetric about the middle. We call the argument made in this paragraph \textbf{argument 3}. \\

Now we consider the two edges $\{(1,n-1),(2,n-1)\}$ and $\{(2,n-1),(3,n-1)$. \\

\begin{center}
\begin{tikzpicture}[declare function={a=0.75;}]
\foreach \y in {0,...,2} 
\foreach \x in {0,...,\numexpr 2 - \y}
{
\draw ({(\x + (\y / 2))*a},{(sqrt(3) * \y / 2)*a}) coordinate (\x\y) ;
\filldraw[black] ({(\x + (\y / 2))*a},{(sqrt(3) * \y / 2)*a}) circle (2pt) ;
}

\draw (01) -- (02) ;
\draw (02) -- (11) ;
\draw (01) -- (11) ;
\draw (01) -- (10) ;
\draw (11) -- (10) ;
\draw (00) -- (01) ;
\draw (20) -- (11) ;
\draw[dashed] (00) -- (10) ;
\draw[dashed] (10) -- (20) ;

\draw (00) node[xshift=-0.9cm]{$(1,n-1)$} ;
\draw (20) node[xshift=0.9cm]{$(3,n-1)$} ;
\draw (10) node[yshift=-0.4cm]{$(2,n-1)$} ;

\end{tikzpicture}
\end{center}

\noindent
Note if we know whether the edges $\{(1,n-1),(1,n)\}$ and $\{(1,n),(2,n-1)\}$ are in $A$, then we know whether $\{(1,n-1),(2,n-1)\}$ is in $A$ because the face with vertices $(1,n-1),(1,n),(2,n-1)$ must have an even number of edges in $A$. Likewise, if we know whether the edges $\{(2,n-1),(2,n)\}$ and $\{(2,n),(3,n-1)\}$ are in $A$, then we know whether $\{(2,n-1),(3,n-1)\}$ is in $A$ because the face with vertices $(2,n-1),(2,n),(3,n-1)$ must have an even number of edges in $A$. In general, if we know whether all the edges of a face is in $A$ except for one of the edges $e$, then we can also determine whether $e \in A$. By the previously established symmetries, this means $\{(1,n-1),(2,n-1)\}$ and $\{(2,n-1),(3,n-1)\}$ are symmetric about the middle. We call the argument made in this paragraph \textbf{argument 4}. \\

Now we consider the edges $\{(2,n-1),(2,n-2)\}$ and $\{(2,n-1),(3,n-2)\}$. 

\begin{center}
\begin{tikzpicture}[declare function={a=0.75;}]
\foreach \y in {0,...,3} 
\foreach \x in {0,...,\numexpr 3 - \y}
{
\draw ({(\x + (\y / 2))*a},{(sqrt(3) * \y / 2)*a}) coordinate (\x\y) ;
\filldraw[black] ({(\x + (\y / 2))*a},{(sqrt(3) * \y / 2)*a}) circle (2pt) ;
}

\draw (02) -- (03) ;
\draw (03) -- (12) ;
\draw (02) -- (12) ;
\draw (02) -- (11) ;
\draw (12) -- (11) ;
\draw (01) -- (02) ;
\draw (21) -- (12) ;
\draw (01) -- (11) ;
\draw (11) -- (21) ;
\draw[dashed] (11) -- (10) ;
\draw[dashed] (11) -- (20) ;

\end{tikzpicture}
\end{center}

\noindent
By the symmetry established on the pairs of edges $\{(1,n),(2,n-1)\},\{(2,n),(2,n-1)\}$ and \linebreak $\{(1,n-1),(2,n-1)\},\{(2,n-1),(3,n-1)\}$, we know the edges $\{(2,n-1),(2,n-2)\}$ and $\{(2,n-1),(3,n-2)\}$ must either both be in $A$ or both not be in $A$, because vertex $(2,n-1)$ must have an even number of edges in $A$. We call the argument made in this paragraph \textbf{argument 5}. \\

We now consider the edge $\{(2,n-2),(3,n-2)\}$. 

\begin{center}
\begin{tikzpicture}[declare function={a=0.75;}]
\foreach \y in {0,...,3} 
\foreach \x in {0,...,\numexpr 3 - \y}
{
\draw ({(\x + (\y / 2))*a},{(sqrt(3) * \y / 2)*a}) coordinate (\x\y) ;
\filldraw[black] ({(\x + (\y / 2))*a},{(sqrt(3) * \y / 2)*a}) circle (2pt) ;
}

\draw (02) -- (03) ;
\draw (03) -- (12) ;
\draw (02) -- (12) ;
\draw (02) -- (11) ;
\draw (12) -- (11) ;
\draw (01) -- (02) ;
\draw (21) -- (12) ;
\draw (01) -- (11) ;
\draw (11) -- (21) ;
\draw (11) -- (10) ;
\draw (11) -- (20) ;
\draw[dashed] (10) -- (20) ;

\end{tikzpicture}
\end{center}

\noindent
The edge $\{(2,n-2),(3,n-2)\}$ is symmetric about the middle by an argument analogous to argument 1. \\

We now consider the edges $\{(1,n-1),(2,n-2)\}$ and $\{(3,n-1),(3,n-2)\}$, \\

\begin{center}
\begin{tikzpicture}[declare function={a=0.75;}]
\foreach \y in {0,...,3} 
\foreach \x in {0,...,\numexpr 3 - \y}
{
\draw ({(\x + (\y / 2))*a},{(sqrt(3) * \y / 2)*a}) coordinate (\x\y) ;
\filldraw[black] ({(\x + (\y / 2))*a},{(sqrt(3) * \y / 2)*a}) circle (2pt) ;
}

\draw (02) -- (03) ;
\draw (03) -- (12) ;
\draw (02) -- (12) ;
\draw (02) -- (11) ;
\draw (12) -- (11) ;
\draw (01) -- (02) ;
\draw (21) -- (12) ;
\draw (01) -- (11) ;
\draw (11) -- (21) ;
\draw (11) -- (10) ;
\draw (11) -- (20) ;
\draw (10) -- (20) ;
\draw[dashed] (01) -- (10) ;
\draw[dashed] (21) -- (20) ;

\end{tikzpicture}
\end{center}

\noindent
The edges $\{(1,n-1),(2,n-2)\}$ and $\{(3,n-1),(3,n-2)\}$ are symmetric about the middle by an argument analogous to argument 4. \\

We now consider the edges $\{(1,n-1),(1,n-2)\}$ and $\{(3,n-1),(4,n-2)\}$. \\

\begin{center}
\begin{tikzpicture}[declare function={a=0.75;}]
\foreach \y in {0,...,3} 
\foreach \x in {0,...,\numexpr 3 - \y}
{
\draw ({(\x + (\y / 2))*a},{(sqrt(3) * \y / 2)*a}) coordinate (\x\y) ;
\filldraw[black] ({(\x + (\y / 2))*a},{(sqrt(3) * \y / 2)*a}) circle (2pt) ;
}

\draw (02) -- (03) ;
\draw (03) -- (12) ;
\draw (02) -- (12) ;
\draw (02) -- (11) ;
\draw (12) -- (11) ;
\draw (01) -- (02) ;
\draw (21) -- (12) ;
\draw (01) -- (11) ;
\draw (11) -- (21) ;
\draw (11) -- (10) ;
\draw (11) -- (20) ;
\draw (10) -- (20) ;
\draw (01) -- (10) ;
\draw (21) -- (20) ;
\draw[dashed] (00) -- (01) ;
\draw[dashed] (30) -- (21) ;

\end{tikzpicture}
\end{center}

\noindent
The edges $\{(1,n-1),(1,n-2)\}$ and $\{(3,n-1),(4,n-2)\}$ are symmetric about the middle by an argument analogous to argument 3. \\

We now consider the edges $\{(1,n-2),(2,n-2)\}$ and $\{(3,n-2),(4,n-2)\}$. \\

\begin{center}
\begin{tikzpicture}[declare function={a=0.75;}]
\foreach \y in {0,...,3} 
\foreach \x in {0,...,\numexpr 3 - \y}
{
\draw ({(\x + (\y / 2))*a},{(sqrt(3) * \y / 2)*a}) coordinate (\x\y) ;
\filldraw[black] ({(\x + (\y / 2))*a},{(sqrt(3) * \y / 2)*a}) circle (2pt) ;
}

\draw (02) -- (03) ;
\draw (03) -- (12) ;
\draw (02) -- (12) ;
\draw (02) -- (11) ;
\draw (12) -- (11) ;
\draw (01) -- (02) ;
\draw (21) -- (12) ;
\draw (01) -- (11) ;
\draw (11) -- (21) ;
\draw (11) -- (10) ;
\draw (11) -- (20) ;
\draw (10) -- (20) ;
\draw (01) -- (10) ;
\draw (21) -- (20) ;
\draw (00) -- (01) ;
\draw (30) -- (21) ;
\draw[dashed] (00) -- (10) ;
\draw[dashed] (20) -- (30) ;

\end{tikzpicture}
\end{center}

\noindent
The edges $\{(1,n-2),(2,n-2)\}$ and $\{(3,n-2),(4,n-2)\}$ are symmetric about the middle by an argument analogous to argument 4. \\

We now consider the edges $\{(2,n-2),(3,n-3)\}$ and $\{(3,n-2),(3,n-3)\}$. \\

\begin{center}
\begin{tikzpicture}[declare function={a=0.75;}]
\foreach \y in {0,...,4} 
\foreach \x in {0,...,\numexpr 4 - \y}
{
\draw ({(\x + (\y / 2))*a},{(sqrt(3) * \y / 2)*a}) coordinate (\x\y) ;
\filldraw[black] ({(\x + (\y / 2))*a},{(sqrt(3) * \y / 2)*a}) circle (2pt) ;
}

\draw (03) -- (04) ;
\draw (04) -- (13) ;
\draw (03) -- (13) ;
\draw (03) -- (12) ;
\draw (13) -- (12) ;
\draw (02) -- (03) ;
\draw (22) -- (13) ;
\draw (02) -- (12) ;
\draw (12) -- (22) ;
\draw (12) -- (11) ;
\draw (12) -- (21) ;
\draw (11) -- (21) ;
\draw (02) -- (11) ;
\draw (22) -- (21) ;
\draw (01) -- (02) ;
\draw (31) -- (22) ;
\draw (01) -- (11) ;
\draw (21) -- (31) ;
\draw[dashed] (20) -- (11) ;
\draw[dashed] (20) -- (21) ;

\end{tikzpicture}
\end{center}

\noindent
The edges $\{(2,n-2),(3,n-3)\}$ and $\{(3,n-2),(3,n-3)\}$ are symmetric about the middle by an argument analogous to argument 2. One can continue downwards the triangle and see $A$ is symmetric about the middle using arguments analogous to arguments 1, 2, 3, 4 and 5. At each layer, start from the middle and work your way outwards.
\end{proof}

\begin{corollary} 
Any edge lying on the middle in $T_n$ is not in any totally even subset $A$. 
\end{corollary}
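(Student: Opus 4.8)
The plan is first to observe that only a very restricted set of edges actually lie on the middle, and then to dispatch each of them by the same three-edge parity count used in ``argument 1'' of the proof of Theorem 1 — only now we are allowed to invoke Theorem 1 itself.

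First I would identify the edges on the middle. In the equilateral embedding the middle is the vertical line through the apex $(1,n+1)$. A one-line coordinate check shows that every diagonal edge of $T_n$ spans a horizontal interval of length $1/2$ whose endpoints are consecutive multiples of $1/2$, so the vertical line $X = n/2$ can meet a diagonal edge only at a vertex, never in its interior; and a horizontal edge $\{(x,y),(x+1,y)\}$ has the middle passing through its interior precisely when $x = (n-y+2)/2$, which forces $y \equiv n \pmod 2$ and $1 \le y \le n$. So the edges lying on the middle are exactly $e_y := \{((n-y+2)/2,\,y),\,((n-y+4)/2,\,y)\}$ for such $y$; note that the reflection of Theorem 1 swaps the two endpoints of $e_y$, which is why each $e_y$ was already seen to be symmetric about the middle.

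Then, given a totally even subset $A$ and a middle edge $e_y$, I would look at the triangular face $F_y$ lying directly above $e_y$, with vertices $((n-y+2)/2,\,y)$, $((n-y+4)/2,\,y)$, and $((n-y+2)/2,\,y+1)$ — all three are genuine vertices of $T_n$ whenever $y \le n$, so $F_y$ is a finite face. The reflection $(u,v) \mapsto (n+3-u-v,\,v)$ fixes the top vertex $((n-y+2)/2,\,y+1)$ of $F_y$ and interchanges the other two, hence it interchanges the two sides of $F_y$ other than $e_y$. By Theorem 1 those two sides are therefore both in $A$ or both outside $A$, so an even number of them lie in $A$. Since $A$ is totally even, $F_y$ contains an even number of edges of $A$ in total, and $F_y$ has exactly the three edges $e_y$ and those two; subtracting, $e_y$ must lie in $A$ an even number of times by itself, i.e.\ $e_y \notin A$. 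Running this for every admissible $y$ yields the corollary, and the special case $y = n$ is exactly the statement deferred in ``argument 1'' of the proof of Theorem 1.

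I do not expect a real obstacle. The only point requiring care is the bookkeeping in the first paragraph: correctly listing the middle edges and checking that the face $F_y$ above each of them is indeed a finite face of $T_n$ in every relevant row, including the extremal row $y = n$ and, when $n$ is odd, the row $y = 1$ (the degenerate $T_1$ situation covered by the special clause in the definition of the middle). After that, the proof is just the parity count over a triangular face that already appears repeatedly in Theorem 1.
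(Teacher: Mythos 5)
Your proof is correct and is essentially the paper's own argument: both rest on the observation that the two slanted edges of the face just above a middle edge are swapped by the reflection of Theorem~1, hence contribute an even count to that face, forcing the middle edge itself out of $A$ by the face-parity condition. The paper phrases this as a two-line contradiction and omits your (correct) preliminary bookkeeping that only horizontal edges in rows $y \equiv n \pmod 2$ can lie on the middle, but the substance is identical.
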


\begin{proof}
Assume otherwise for the sake of contradiction, and let the edge that lies on the middle be \linebreak $\{(x,y),(x+1,y)\}$. This would force exactly one of the edges $\{(x,y),(x,y+1)\}, \{(x+1,y),(x,y+1)\}$ to be in $A$, contradicting the symmetry about the middle established in Theorem 1.
\end{proof}

Note that there are analogous proofs for the fact that any totally even subset of $T_n$ is symmetric about the line containing the vertices $(1,1)$ and $(2,2)$ and about the line containing the vertices $(n+1,1)$ and $(n-1,2)$. These three axes of symmetry also imply rotational symmetry: just flip $T_n$ about any two of the three axes of symmetry. See the picture of $T_n$ below.

\begin{center}
\begin{tikzpicture}[declare function={a=4;}]
\filldraw[black] (0,0) circle (2pt) ;
\filldraw[black] (a,0) circle (2pt) ;
\filldraw[black] (a/2, {sqrt(3)*a/2}) circle (2pt) ;

\draw (0,0) node[yshift=-0.6cm]{$(1,1)$} ;
\draw (a,0) node[yshift=-0.6cm]{$(n+1,1)$} ;
\draw (a/2, {sqrt(3)*a/2}) node[yshift=0.6cm]{$(1,n+1)$} ;

\draw[dashed] (a/2,-0.4) -- (a/2,{(sqrt(3)*a/2)+0.4}) ;
\draw[dashed] ({-sqrt(0.12)},{-sqrt(0.04)}) -- ({(3*a/4)+sqrt(0.12)},{(sqrt(3)*a/4)+sqrt(0.04)}) ;
\draw[dashed] ({(a/4) - sqrt(0.12)},{(sqrt(3)*a/4)+sqrt(0.04)}) -- ({a+sqrt(0.12)},{-sqrt(0.04)}) ;

\draw (0,0) -- (a,0) ;
\draw (a,0) -- (a/2,{sqrt(3)*a/2}) ;
\draw (a/2,{sqrt(3)*a/2}) -- (0,0) ;

\draw (a/2,-2) node{Figure 5} ;
\end{tikzpicture}
\end{center}

Note how in Figure 5 the three dashed lines (axes of symmetry) split the equilateral triangle into six triangular regions with the same side lengths. Given a totally even subset $A$ of $T_n$, the edges embedded in these regions are symmetric to each other, that is, there exists an isometric map from any of the six regions to another that preserves the edges. 
\begin{theorem}
Let $A$ and $B$ be totally even subsets of $T_n$. If $A$ and $B$ agree on the bottom side, then $A = B$. In other words, if $\mathbbm{1}_A(e) = \mathbbm{1}_B(e)$ for all $e$ of the form $e = \{(i,1),(i+1,1)\}$ where $i \in [n]$, then $\mathbbm{1}_A = \mathbbm{1}_B$.
\end{theorem}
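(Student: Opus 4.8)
The plan is to reduce the statement to one clean claim and then prove that claim by ``peeling'' the grid one row at a time, starting from the bottom side. Since the totally even subsets of $T_n$ form a subspace of $\mathbb{F}_2^{E(T_n)}$ (as noted before this section), and since $A$ and $B$ agree on the bottom side, the symmetric difference $C := A \triangle B$ is a totally even subset containing \emph{no} edge of the bottom side. So it suffices to prove the following: if $C$ is a totally even subset of $T_n$ with no edge on the bottom side, then $C = \emptyset$; this immediately gives $A = B$.

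To set up the induction I would group the edges of $T_n$ by rows: for $1 \le y \le n$ write $h^{(y)}_i = \{(i,y),(i+1,y)\}$ for the horizontal edges of row $y$, and for $1 \le y \le n$ write $u^{(y)}_i = \{(i,y),(i,y+1)\}$ and $v^{(y)}_i = \{(i+1,y),(i,y+1)\}$ for the two families of slanted edges joining row $y$ to row $y+1$, with $i$ ranging over the appropriate interval in each case. I would then prove by induction on $y$ the statement $H(y)$: every $h^{(y')}_i$ with $y' \le y$ and every $u^{(y')}_i, v^{(y')}_i$ with $y' < y$ lies outside $C$. The base case $H(1)$ is exactly the hypothesis that no bottom edge lies in $C$.

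For the inductive step $H(y) \Rightarrow H(y+1)$ I would first show that all slanted edges between rows $y$ and $y+1$ avoid $C$, working left to right and alternating the same vertex-parity and face-parity deductions used in the proof of Theorem~1 (arguments~1--4 there): at the vertex $(1,y)$ the only incident edge not already known to lie outside $C$ is $u^{(y)}_1$, so even degree forces $u^{(y)}_1 \notin C$; then the upward triangle $(1,y),(2,y),(1,y+1)$ has $h^{(y)}_1, u^{(y)}_1 \notin C$, so even face-count forces $v^{(y)}_1 \notin C$; the vertex $(2,y)$ then forces $u^{(y)}_2 \notin C$; the next upward triangle forces $v^{(y)}_2 \notin C$; and so on across the whole row until the rightmost (right-side) vertex of row $y$ is reached. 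Once every $u^{(y)}_i, v^{(y)}_i$ lies outside $C$, each downward triangle $(i+1,y),(i,y+1),(i+1,y+1)$ has its two slanted edges outside $C$, so even face-count forces $h^{(y+1)}_i \notin C$, which establishes $H(y+1)$. Since row $n+1$ is the single vertex $(1,n+1)$, which has no horizontal edge, $H(n)$ together with one further application of the slanted-edge argument at $y=n$ shows $C = \emptyset$.

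I expect the only real work to be bookkeeping at the boundary: verifying that the leftmost vertex of each row genuinely has exactly one ``new'' incident edge, that the left-to-right propagation terminates correctly at the right-side vertex of each row (where the final upward triangle still has its horizontal edge known from $H(y)$), and that the short top rows cause no trouble. All of this is routine from the explicit coordinate description of the edges of $T_n$, and every individual deduction is just an instance of one of the parity arguments already used for Theorem~1.
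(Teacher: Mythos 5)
Your proposal is correct and is essentially the paper's own argument: the same bottom-to-top, left-to-right propagation in which each step exhibits a vertex or face with exactly one undetermined edge, forced by the parity condition. The only cosmetic difference is that you first pass to $C = A \triangle B$ and show the homogeneous statement ($C$ totally even and empty on the bottom side implies $C = \emptyset$), which is an immediate consequence of the subspace observation the paper already makes at the start of Section~2 and lets you avoid the appeal to Theorem~1 that the paper uses as a shortcut in its worked example.
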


\begin{proof}
We will show that knowing that $A$ is totally even and the values of $\mathbbm{1}_A(e)$ for all $e$ on the bottom side, the indicator vector $\mathbbm{1}_A$ is uniquely determined. In the general case for any totally even subset of $T_n$, we start at the bottom layer and move upwards, and at each layer we scan from left to right checking whether an edge is in $A$ or not. The key point is that at any step in the process (until we've determined all edges) there is a face or a vertex with exactly one edge $e$ remaining that we don't know to be in $A$ or not, however the assumption that $A$ is totally even forces $e$ to be in $A$ in the case the vertex or face has an odd number of edges (excluding $e$) in $A$, and forces this last edge to not be in $A$ otherwise. \\

We show it for an example in $T_4$, and then discuss the general case a bit more precisely at the end. Solid lines are edges we know to be in $A$, blank lines are edges we know not to be in $A$, and dashed lines are edges we are still considering. We work our way from the bottom and upwards, and left to right. We first consider the edge $\{(1,1),(1,2)\}$. \\

\begin{center}
\begin{tikzpicture}[declare function={a=0.5;}]
\foreach \y in {0,...,4} 
\foreach \x in {0,...,\numexpr 4 - \y}
{
\draw ({(\x + (\y / 2))*a},{(sqrt(3) * \y / 2)*a}) coordinate (\x\y) ;
\filldraw[black] ({(\x + (\y / 2))*a},{(sqrt(3) * \y / 2)*a}) circle (1.5pt) ;
}

\draw[dashed] (03) -- (04) ;
\draw[dashed] (04) -- (13) ;
\draw[dashed] (03) -- (13) ;
\draw[dashed] (03) -- (12) ;
\draw[dashed] (13) -- (12) ;
\draw[dashed] (02) -- (03) ;
\draw[dashed] (22) -- (13) ;
\draw[dashed] (02) -- (12) ;
\draw[dashed] (12) -- (22) ;
\draw[dashed] (12) -- (11) ;
\draw[dashed] (12) -- (21) ;
\draw[dashed] (11) -- (21) ;
\draw[dashed] (02) -- (11) ;
\draw[dashed] (22) -- (21) ;
\draw[dashed] (01) -- (02) ;
\draw[dashed] (31) -- (22) ;
\draw[dashed] (01) -- (11) ;
\draw[dashed] (21) -- (31) ;
\draw[dashed] (20) -- (11) ;
\draw[dashed] (20) -- (21) ;
\draw[dashed] (00) -- (01) ;
\draw[dashed] (01) -- (10) ;
\draw[dashed] (10) -- (11) ;
\draw (00) -- (10) ;
\draw (30) -- (40) ;
\draw[dashed] (21)-- (30) ;
\draw[dashed] (30) -- (31) ;
\draw[dashed] (31) -- (40) ;

\end{tikzpicture}
\end{center}

We must have $\{(1,1),(1,2)\} \in A$, otherwise vertex $(1,1)$ will be incident to an odd number of edges in $A$. Next we consider the edge $\{(2,1),(1,2)\}$. \\

\begin{center}
\begin{tikzpicture}[declare function={a=0.5;}]
\foreach \y in {0,...,4} 
\foreach \x in {0,...,\numexpr 4 - \y}
{
\draw ({(\x + (\y / 2))*a},{(sqrt(3) * \y / 2)*a}) coordinate (\x\y) ;
\filldraw[black] ({(\x + (\y / 2))*a},{(sqrt(3) * \y / 2)*a}) circle (1.5pt) ;
}

\draw[dashed] (03) -- (04) ;
\draw[dashed] (04) -- (13) ;
\draw[dashed] (03) -- (13) ;
\draw[dashed] (03) -- (12) ;
\draw[dashed] (13) -- (12) ;
\draw[dashed] (02) -- (03) ;
\draw[dashed] (22) -- (13) ;
\draw[dashed] (02) -- (12) ;
\draw[dashed] (12) -- (22) ;
\draw[dashed] (12) -- (11) ;
\draw[dashed] (12) -- (21) ;
\draw[dashed] (11) -- (21) ;
\draw[dashed] (02) -- (11) ;
\draw[dashed] (22) -- (21) ;
\draw[dashed] (01) -- (02) ;
\draw[dashed] (31) -- (22) ;
\draw[dashed] (01) -- (11) ;
\draw[dashed] (21) -- (31) ;
\draw[dashed] (20) -- (11) ;
\draw[dashed] (20) -- (21) ;
\draw (00) -- (01) ;
\draw[dashed] (01) -- (10) ;
\draw[dashed] (10) -- (11) ;
\draw (00) -- (10) ;
\draw (30) -- (40) ;
\draw[dashed] (21)-- (30) ;
\draw[dashed] (30) -- (31) ;
\draw[dashed] (31) -- (40) ;

\end{tikzpicture}
\end{center}

We must have $\{(2,1),(1,2)\} \not\in A$, otherwise the face with vertices $(1,1),(2,1),(1,2)$ will have three edges in $A$. Next we consider the edge $\{(2,1),(2,2)\}$. \\

\begin{center}
\begin{tikzpicture}[declare function={a=0.5;}]
\foreach \y in {0,...,4} 
\foreach \x in {0,...,\numexpr 4 - \y}
{
\draw ({(\x + (\y / 2))*a},{(sqrt(3) * \y / 2)*a}) coordinate (\x\y) ;
\filldraw[black] ({(\x + (\y / 2))*a},{(sqrt(3) * \y / 2)*a}) circle (1.5pt) ;
}

\draw[dashed] (03) -- (04) ;
\draw[dashed] (04) -- (13) ;
\draw[dashed] (03) -- (13) ;
\draw[dashed] (03) -- (12) ;
\draw[dashed] (13) -- (12) ;
\draw[dashed] (02) -- (03) ;
\draw[dashed] (22) -- (13) ;
\draw[dashed] (02) -- (12) ;
\draw[dashed] (12) -- (22) ;
\draw[dashed] (12) -- (11) ;
\draw[dashed] (12) -- (21) ;
\draw[dashed] (11) -- (21) ;
\draw[dashed] (02) -- (11) ;
\draw[dashed] (22) -- (21) ;
\draw[dashed] (01) -- (02) ;
\draw[dashed] (31) -- (22) ;
\draw[dashed] (01) -- (11) ;
\draw[dashed] (21) -- (31) ;
\draw[dashed] (20) -- (11) ;
\draw[dashed] (20) -- (21) ;
\draw (00) -- (01) ;
\draw[dashed] (10) -- (11) ;
\draw (00) -- (10) ;
\draw (30) -- (40) ;
\draw[dashed] (21)-- (30) ;
\draw[dashed] (30) -- (31) ;
\draw[dashed] (31) -- (40) ;

\end{tikzpicture}
\end{center}

we must have $\{(2,1),(2,2)\} \in A$, otherwise vertex $(2,1)$ would be incident to an odd (namely 1) number of edges of $A$. Next we consider the edge $\{(3,1),(2,2)\}$. \\

\begin{center}
\begin{tikzpicture}[declare function={a=0.5;}]
\foreach \y in {0,...,4} 
\foreach \x in {0,...,\numexpr 4 - \y}
{
\draw ({(\x + (\y / 2))*a},{(sqrt(3) * \y / 2)*a}) coordinate (\x\y) ;
\filldraw[black] ({(\x + (\y / 2))*a},{(sqrt(3) * \y / 2)*a}) circle (2pt) ;
}

\draw[dashed] (03) -- (04) ;
\draw[dashed] (04) -- (13) ;
\draw[dashed] (03) -- (13) ;
\draw[dashed] (03) -- (12) ;
\draw[dashed] (13) -- (12) ;
\draw[dashed] (02) -- (03) ;
\draw[dashed] (22) -- (13) ;
\draw[dashed] (02) -- (12) ;
\draw[dashed] (12) -- (22) ;
\draw[dashed] (12) -- (11) ;
\draw[dashed] (12) -- (21) ;
\draw[dashed] (11) -- (21) ;
\draw[dashed] (02) -- (11) ;
\draw[dashed] (22) -- (21) ;
\draw[dashed] (01) -- (02) ;
\draw[dashed] (31) -- (22) ;
\draw[dashed] (01) -- (11) ;
\draw[dashed] (21) -- (31) ;
\draw[dashed] (20) -- (11) ;
\draw[dashed] (20) -- (21) ;
\draw (00) -- (01) ;
\draw (10) -- (11) ;
\draw (00) -- (10) ;
\draw (30) -- (40) ;
\draw[dashed] (21)-- (30) ;
\draw[dashed] (30) -- (31) ;
\draw[dashed] (31) -- (40) ;

\end{tikzpicture}
\end{center}

We must have $\{(3,1),(2,2)\} \in A$, otherwise the face with vertices $(2,1)$, $(3,1),(2,2)$ will have an odd number of edges of $A$. \\

\begin{center}
\begin{tikzpicture}[declare function={a=0.5;}]
\foreach \y in {0,...,4} 
\foreach \x in {0,...,\numexpr 4 - \y}
{
\draw ({(\x + (\y / 2))*a},{(sqrt(3) * \y / 2)*a}) coordinate (\x\y) ;
\filldraw[black] ({(\x + (\y / 2))*a},{(sqrt(3) * \y / 2)*a}) circle (1.5pt) ;
}

\draw[dashed] (03) -- (04) ;
\draw[dashed] (04) -- (13) ;
\draw[dashed] (03) -- (13) ;
\draw[dashed] (03) -- (12) ;
\draw[dashed] (13) -- (12) ;
\draw[dashed] (02) -- (03) ;
\draw[dashed] (22) -- (13) ;
\draw[dashed] (02) -- (12) ;
\draw[dashed] (12) -- (22) ;
\draw[dashed] (12) -- (11) ;
\draw[dashed] (12) -- (21) ;
\draw[dashed] (11) -- (21) ;
\draw[dashed] (02) -- (11) ;
\draw[dashed] (22) -- (21) ;
\draw[dashed] (01) -- (02) ;
\draw[dashed] (31) -- (22) ;
\draw[dashed] (01) -- (11) ;
\draw[dashed] (21) -- (31) ;
\draw (20) -- (11) ;
\draw[dashed] (20) -- (21) ;
\draw (00) -- (01) ;
\draw (10) -- (11) ;
\draw (00) -- (10) ;
\draw (30) -- (40) ;
\draw[dashed] (21)-- (30) ;
\draw[dashed] (30) -- (31) ;
\draw[dashed] (31) -- (40) ;

\end{tikzpicture}
\end{center}

We fill in the right side by the symmetry about the middle established by Theorem 1. \\

\begin{center}
\begin{tikzpicture}[declare function={a=0.5;}]
\foreach \y in {0,...,4} 
\foreach \x in {0,...,\numexpr 4 - \y}
{
\draw ({(\x + (\y / 2))*a},{(sqrt(3) * \y / 2)*a}) coordinate (\x\y) ;
\filldraw[black] ({(\x + (\y / 2))*a},{(sqrt(3) * \y / 2)*a}) circle (1.5pt) ;
}

\draw[dashed] (03) -- (04) ;
\draw[dashed] (04) -- (13) ;
\draw[dashed] (03) -- (13) ;
\draw[dashed] (03) -- (12) ;
\draw[dashed] (13) -- (12) ;
\draw[dashed] (02) -- (03) ;
\draw[dashed] (22) -- (13) ;
\draw[dashed] (02) -- (12) ;
\draw[dashed] (12) -- (22) ;
\draw[dashed] (12) -- (11) ;
\draw[dashed] (12) -- (21) ;
\draw[dashed] (11) -- (21) ;
\draw[dashed] (02) -- (11) ;
\draw[dashed] (22) -- (21) ;
\draw[dashed] (01) -- (02) ;
\draw[dashed] (31) -- (22) ;
\draw[dashed] (01) -- (11) ;
\draw[dashed] (21) -- (31) ;
\draw (20) -- (11) ;
\draw (20) -- (21) ;
\draw (00) -- (01) ;
\draw (10) -- (11) ;
\draw (00) -- (10) ;
\draw (30) -- (40) ;
\draw (21)-- (30) ;
\draw (31) -- (40) ;
\end{tikzpicture}
\end{center}

 We must have $\{(1,2),(2,2)\} \in A$ otherwise the face with vertices $(2,1)$, $(1,2),(2,2)$ will contain an odd number of edges of $A$. By symmetry, this must mean $\{(3,2),(4,2)\} \in A$ as well. Finally, we have $\{(2,2),(3,2)\} \not\in A$ because otherwise the face with vertices $(3,1), (2,2), (3,2)$ will have an odd number of edges in $A$. \\

\begin{center}
\begin{tikzpicture}[declare function={a=0.5;}]
\foreach \y in {0,...,4} 
\foreach \x in {0,...,\numexpr 4 - \y}
{
\draw ({(\x + (\y / 2))*a},{(sqrt(3) * \y / 2)*a}) coordinate (\x\y) ;
\filldraw[black] ({(\x + (\y / 2))*a},{(sqrt(3) * \y / 2)*a}) circle (1.5pt) ;
}

\draw[dashed] (03) -- (04) ;
\draw[dashed] (04) -- (13) ;
\draw[dashed] (03) -- (13) ;
\draw[dashed] (03) -- (12) ;
\draw[dashed] (13) -- (12) ;
\draw[dashed] (02) -- (03) ;
\draw[dashed] (22) -- (13) ;
\draw[dashed] (02) -- (12) ;
\draw[dashed] (12) -- (22) ;
\draw[dashed] (12) -- (11) ;
\draw[dashed] (12) -- (21) ;
\draw[dashed] (02) -- (11) ;
\draw[dashed] (22) -- (21) ;
\draw[dashed] (01) -- (02) ;
\draw[dashed] (31) -- (22) ;
\draw (01) -- (11) ;
\draw (21) -- (31) ;
\draw (20) -- (11) ;
\draw (20) -- (21) ;
\draw (00) -- (01) ;
\draw (10) -- (11) ;
\draw (00) -- (10) ;
\draw (30) -- (40) ;
\draw (21)-- (30) ;
\draw (31) -- (40) ;
\end{tikzpicture}
\end{center}

We must have $\{(1,2),(1,3)\} \not\in A$ otherwise vertex $(1,2)$ will be incident to an odd number of edges of $A$. By symmetry, $\{(4,2),(3,3)\} \not\in A$. \\

\begin{center}
\begin{tikzpicture}[declare function={a=0.5;}]
\foreach \y in {0,...,4} 
\foreach \x in {0,...,\numexpr 4 - \y}
{
\draw ({(\x + (\y / 2))*a},{(sqrt(3) * \y / 2)*a}) coordinate (\x\y) ;
\filldraw[black] ({(\x + (\y / 2))*a},{(sqrt(3) * \y / 2)*a}) circle (1.5pt) ;
}

\draw[dashed] (03) -- (04) ;
\draw[dashed] (04) -- (13) ;
\draw[dashed] (03) -- (13) ;
\draw[dashed] (03) -- (12) ;
\draw[dashed] (13) -- (12) ;
\draw[dashed] (02) -- (03) ;
\draw[dashed] (22) -- (13) ;
\draw[dashed] (02) -- (12) ;
\draw[dashed] (12) -- (22) ;
\draw[dashed] (12) -- (11) ;
\draw[dashed] (12) -- (21) ;
\draw[dashed] (02) -- (11) ;
\draw[dashed] (22) -- (21) ;
\draw (01) -- (11) ;
\draw (21) -- (31) ;
\draw (20) -- (11) ;
\draw (20) -- (21) ;
\draw (00) -- (01) ;
\draw (10) -- (11) ;
\draw (00) -- (10) ;
\draw (30) -- (40) ;
\draw (21)-- (30) ;
\draw (31) -- (40) ;
\end{tikzpicture}
\end{center}

We must have $\{(1,3),(2,2)\} \in A$, otherwise the face with vertices $(1,2),(2,2),(1,3)$ will contain an odd number of edges of $A$. By symmetry, $\{(3,2),(3,3)\} \in A$. \\ 

\begin{center}
\begin{tikzpicture}[declare function={a=0.5;}]
\foreach \y in {0,...,4} 
\foreach \x in {0,...,\numexpr 4 - \y}
{
\draw ({(\x + (\y / 2))*a},{(sqrt(3) * \y / 2)*a}) coordinate (\x\y) ;
\filldraw[black] ({(\x + (\y / 2))*a},{(sqrt(3) * \y / 2)*a}) circle (1.5pt) ;
}

\draw[dashed] (03) -- (04) ;
\draw[dashed] (04) -- (13) ;
\draw[dashed] (03) -- (13) ;
\draw[dashed] (03) -- (12) ;
\draw[dashed] (13) -- (12) ;
\draw[dashed] (02) -- (03) ;
\draw[dashed] (22) -- (13) ;
\draw[dashed] (02) -- (12) ;
\draw[dashed] (12) -- (22) ;
\draw[dashed] (12) -- (11) ;
\draw[dashed] (12) -- (21) ;
\draw (02) -- (11) ;
\draw (22) -- (21) ;
\draw (01) -- (11) ;
\draw (21) -- (31) ;
\draw (20) -- (11) ;
\draw (20) -- (21) ;
\draw (00) -- (01) ;
\draw (10) -- (11) ;
\draw (00) -- (10) ;
\draw (30) -- (40) ;
\draw (21)-- (30) ;
\draw (31) -- (40) ;
\end{tikzpicture}
\end{center}

We must have $\{(2,2),(2,3)\} \not\in A$ otherwise vertex $(2,2)$ will be incident to an odd number of edges. By symmetry, $\{(3,2),(2,3)\} \not\in A$ as well. \\

\begin{center}
\begin{tikzpicture}[declare function={a=0.5;}]
\foreach \y in {0,...,4} 
\foreach \x in {0,...,\numexpr 4 - \y}
{
\draw ({(\x + (\y / 2))*a},{(sqrt(3) * \y / 2)*a}) coordinate (\x\y) ;
\filldraw[black] ({(\x + (\y / 2))*a},{(sqrt(3) * \y / 2)*a}) circle (1.5pt) ;
}

\draw[dashed] (03) -- (04) ;
\draw[dashed] (04) -- (13) ;
\draw[dashed] (03) -- (13) ;
\draw[dashed] (03) -- (12) ;
\draw[dashed] (13) -- (12) ;
\draw[dashed] (02) -- (03) ;
\draw[dashed] (22) -- (13) ;
\draw[dashed] (02) -- (12) ;
\draw[dashed] (12) -- (22) ;
\draw (02) -- (11) ;
\draw (22) -- (21) ;
\draw (01) -- (11) ;
\draw (21) -- (31) ;
\draw (20) -- (11) ;
\draw (20) -- (21) ;
\draw (00) -- (01) ;
\draw (10) -- (11) ;
\draw (00) -- (10) ;
\draw (30) -- (40) ;
\draw (21)-- (30) ;
\draw (31) -- (40) ;
\end{tikzpicture}
\end{center}

We must have $\{(1,3),(2,3)\} \not \in A$ otherwise the face with vertices $(2,2),(1,3)$, $(2,3)$ will have an odd number of edges in $A$. By symmetry, $\{(2,3),(3,3)\} \not\in A$ as well. \\

In the general case for any totally even subset of $T_n$, one can continue along in this manner by working from the bottom to the top, by considering edges of the form $\{(x,y),(x,y+1)\}$ and $\{(x+1,y), (x,y+1)\}$ as we scan from left to right, and then we go up and consider edges of the form $\{(x,y),(x+1,y)\}$ as we scan from left to right, and then we go up and consider edges of the form $\{(x,y),(x,y+1)\}$ and $\{(x+1,y), (x,y+1)\}$ again and alternate. There will always exist some vertex or face with one remaining edge left that we are still considering, while we already know all its other edges to be in $A$ or not in $A$. 
\end{proof}

\begin{corollary}
Let $A$ and $B$ be totally even subsets of $T_n$. If $A$ and $B$ agree on a side (left, bottom, or right), then $A = B$.
\end{corollary}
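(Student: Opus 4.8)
The plan is to reduce the left-side and right-side cases to the bottom-side case, which is exactly Theorem 2, by exploiting the rotational symmetry of $T_n$ already noted after Theorem 1. First I would make that symmetry completely explicit at the level of the vertex labels: the map $\rho(x,y) = (y,\; n+3-x-y)$ is the restriction to $V(T_n)$ of the $120^\circ$ rotation of the equilateral-triangle embedding about its centre. One checks quickly that $\rho$ permutes the three corners $(1,1)\mapsto(1,n+1)\mapsto(n+1,1)\mapsto(1,1)$, that $\rho^3=\mathrm{id}$ (so $\rho$ is a bijection of $V(T_n)$ with $\rho^{-1}=\rho^2$), and that $\rho$ carries each edge of $T_n$ to an edge and each small triangular finite face to a small triangular finite face while fixing the infinite face. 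Hence $\rho$ induces an automorphism of the plane graph $T_n$, and since incidences between vertices and edges, and between finite faces and edges, are preserved, a set $S\subseteq E$ is totally even if and only if $\rho(S)$ is totally even.

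Next I would track what $\rho$ does to the three sides as described in Definition 4. A direct substitution shows that $\rho$ maps the bottom-side edge $\{(i,1),(i+1,1)\}$ to $\{(1,n+2-i),(1,n+1-i)\}$, so $\rho$ takes the bottom side onto the left side; a similar substitution shows $\rho$ takes the left-side edge $\{(1,j),(1,j+1)\}$ to the right-side edge with parameter $n+1-j$, so $\rho$ takes the left side onto the right side, and therefore (using $\rho^3=\mathrm{id}$) takes the right side onto the bottom side. Consequently $\rho$ cyclically permutes the three sides, and so does $\rho^{-1}=\rho^2$ in the reverse direction.

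With this in hand the corollary follows immediately. Suppose $A$ and $B$ are totally even and agree on the left side. Then $\rho^{-1}(A)$ and $\rho^{-1}(B)$ are totally even (by the first paragraph) and agree on $\rho^{-1}(\text{left side}) = \text{bottom side}$ (by the second paragraph), so Theorem 2 gives $\rho^{-1}(A)=\rho^{-1}(B)$, whence $A=B$; the case where $A$ and $B$ agree on the right side is identical, applying $\rho$ once in place of $\rho^{-1}$. I expect the only point needing care to be the verification that $\rho$ is genuinely a plane-graph automorphism preserving the totally-even property — i.e.\ that it maps finite faces to finite faces and respects all incidences — but this is routine once the explicit formula for $\rho$ is written down, since it is literally a rigid rotation of the embedded triangle. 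Alternatively, one may avoid $\rho$ entirely and simply rerun the scanning argument of the proof of Theorem 2 verbatim, with the roles of the three sides cyclically relabelled.
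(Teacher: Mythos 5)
Your proof is correct and is essentially the approach the paper intends: the paper's entire proof is the one-liner ``consider the proof in Theorem 2,'' i.e.\ the bottom-side argument transfers to the other two sides by the symmetry of $T_n$. Your version, which makes the $120^\circ$ rotation $\rho(x,y)=(y,\,n+3-x-y)$ explicit, verifies that it is a plane-graph automorphism preserving total evenness and cyclically permuting the sides, and then invokes Theorem 2, is simply a careful write-up of what the paper leaves implicit (and your formulas for the images of the corners and side edges check out).
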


\begin{proof}
consider the proof in Theorem 2.
\end{proof}

We claim the three totally even subsets in Figure 4 (located at the beginning of Section 2) formed a basis for all totally even subsets of $T_6$. Consider the left half of the bottom side, in particular consider the edges $\{(1,1),(2,1)\}$, $\{(2,1),(3,1)\},\{(3,1),(4,1)\}$. We first note the three totally even subsets in Figure 4 contain exactly one edge of $\{(1,1),(2,1)\},\{(2,1),(3,1)\},\{(3,1),(4,1)\}$. In this sense, they correspond to the standard basis vectors $(1,0,0), (0,1,0),(0,0,1)$. We can't have a larger basis because by Theorem 2 the bottom side completely determines a totally even subset, and furthermore by Theorem 1 the left half of the bottom side completely determines the bottom side of a totally even subset by symmetry. The next theorem will be to construct these standard basis vectors. We first define left half of the bottom side for general $T_n$. \\

\begin{definition}
The \textbf{left half of the bottom side} of $T_n$ are the edges $\{(i,1),(i+1)\}$ for $i \in \left[ \lfloor \frac{n}{2} \rfloor \right]$ and vertices $(i,1)$ for $i \in \left[ \lceil \frac{n+1}{2} \rceil \right]$. 
\end{definition}

\begin{theorem}
For each $i \in \left[ \lfloor \frac{n}{2} \rfloor \right]$, there exists a totally even subset $A$ of $T_n$ such that $\{(i,1),(i+1,1)\} \in A$ and $\{(i',1),(i'+1,1)\} \not\in A$ for all $i' \in \left[ \lfloor \frac{n}{2} \rfloor \right] \setminus \{i\}$. Furthermore, such indicator vectors form a basis of all totally even subsets. 
\end{theorem}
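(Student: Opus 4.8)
The plan is to derive the ``basis'' half of the statement almost immediately from the results already proved, so that the real work is the existence of one totally even subset for each $i$. Write $m=\lfloor n/2\rfloor$, and for $j\in[m]$ let $e_j=\{(j,1),(j+1,1)\}$ be the $j$-th edge of the left half of the bottom side. Consider the linear map $r$ that sends a totally even subset $A$ to the vector $\big(\mathbbm{1}_A(e_1),\dots,\mathbbm{1}_A(e_m)\big)\in\mathbb F_2^{m}$. I would first check that $r$ is injective. By Theorem~1, for each $j$ the edge $e_j$ lies in $A$ if and only if its reflection $\{(n+1-j,1),(n+2-j,1)\}$ across the middle lies in $A$; and if $n$ is odd the remaining, central bottom edge $\{(\tfrac{n+1}{2},1),(\tfrac{n+3}{2},1)\}$ lies on the middle, so by Corollary~1 it lies in no totally even subset. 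Hence $\mathbbm{1}_A(e_1),\dots,\mathbbm{1}_A(e_m)$ determine $\mathbbm{1}_A$ on the whole bottom side, and by Corollary~2 they determine $A$. So $r$ is injective, and in particular the space $\mathcal T$ of totally even subsets of $T_n$ has dimension at most $m$.

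Now, once we produce totally even subsets $A_1,\dots,A_m$ with $r(A_i)$ equal to the $i$-th standard basis vector of $\mathbb F_2^{m}$, we are done: $r$ becomes an isomorphism, so $\dim\mathcal T=m$, the $A_i$ form a basis, and each $A_i$ is the unique totally even subset containing $e_i$ but none of $e_1,\dots,e_{i-1},e_{i+1},\dots,e_m$ on the left half of the bottom side, which is exactly the assertion of the theorem. It is in fact enough to construct \emph{any} $m$ linearly independent totally even subsets: injectivity of $r$ then forces $r$ to be surjective, so each standard basis vector, hence each $A_i$, is realised. Either way, everything reduces to a construction.

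For the construction I would give an explicit description of $A_i$ and then verify the totally-even conditions locally. Every totally even subset is invariant under the symmetry group of $T_n$ generated by the three reflections of Theorem~1 (see Figure~5), so it suffices to describe $A_i$ on one of the six congruent sub-triangles of Figure~5 and extend by symmetry, and then the vertex- and face-parity conditions need only be verified on one representative of each symmetry class. The pattern to use is, roughly, a hexagonal ring that threads through the grid with a zig-zag boundary, so that every triangular face it crosses contains exactly two of its edges, positioned so that its lowest edges are precisely $e_i$ and its mirror $\{(n+1-i,1),(n+2-i,1)\}$; this is exactly the third configuration of Figure~4. When $i$ is small the ring would run past the corners of $T_n$ and must be ``clipped'' there and folded back on itself, producing the degree-$4$ vertices and the extra filled-in triangles seen in the first two configurations of Figure~4. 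After writing this family down uniformly in $n$ and $i$, one checks that $A_i$ meets the bottom side in exactly $e_i$ and its mirror, and that every vertex lies on an even number of, and every triangular face contains an even number of, edges of $A_i$.

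I expect the main obstacle to be exactly this last verification: pinning down a single clipped-ring pattern that works for all $n$ and all $i\le\lfloor n/2\rfloor$ at once and makes every local parity check pass — in particular near the three corners and along the middle, where the clipping and the forced absence of on-axis edges (Corollary~1) interact. An alternative that avoids an explicit global formula is to run the scanning procedure from the proof of Theorem~2 with the bottom side initialised to $\{e_i,\ \text{mirror of }e_i\}$ and to show, by a suitable invariant maintained along the scan, that it never forces an edge to be both present and absent; the real content of the proof lies there in either formulation.
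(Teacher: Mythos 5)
Your reduction of the theorem to a construction is correct and matches the paper's logic: by Theorem~1 the left half of the bottom side determines the whole bottom side (with the central edge excluded by Corollary~1 when $n$ is odd), and by Theorem~2 the bottom side determines the totally even subset, so the restriction map $r$ to $\mathbb{F}_2^{\lfloor n/2\rfloor}$ is injective; producing $\lfloor n/2\rfloor$ independent totally even subsets then forces $r$ to be an isomorphism and settles the basis claim. That part is fine.

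The gap is in the existence half, which is the actual content of the theorem, and you say so yourself: you describe the desired set only ``roughly'' as a hexagonal ring that must be ``clipped'' at the corners, and you identify pinning down that pattern uniformly in $n$ and $i$ and verifying the parity conditions as the main obstacle --- but you do not overcome it. The paper's device for getting past exactly this obstacle is to avoid describing the ring directly: it defines a simple corner set $A_1$ (all edges in two of the three edge-directions lying in an explicit triangular region anchored at the bottom-left), lets $A_2,A_3$ be its $120^\circ$ rotations, and takes $A = A_1\triangle A_2\triangle A_3$. The clipping you worry about then happens automatically via cancellation in the symmetric difference, and the totally-even verification reduces to checking a handful of region types ($R_1\setminus(R_2\cup R_3)$, $R_1\cap R_2$, $R_1\cap R_2\cap R_3$, etc.), in each of which the surviving edges run in exactly two of the three directions and the local parity check is uniform. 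Your alternative suggestion --- rerunning the scan of Theorem~2 from a prescribed bottom row and proving it never reaches a contradiction --- would also need a concrete invariant that you do not supply; as written, neither route is carried out, so the proposal does not yet prove existence.
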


\begin{proof}
We first show given any $i \in \left[ \lfloor \frac{n}{2} \rfloor \right]$, there exists a totally even subset $A$ in $T_n$ such that \linebreak $\mathbbm{1}_A(\{(i,1),(i+1,1)\}) = 1$ and $\mathbbm{1}_A(\{(i',1),(i'+1,1)\}) = 0$ for all $i' \in \left[ \lfloor \frac{n}{2} \rfloor \right] \setminus \{i\}$. Define $E_1 \subseteq E(T_n)$ to be 

$$E_1 = \{\{(x,y),(x+1,y)\} \in E(T_n) : 1 \leq x \leq i, \ x+y \geq i+1, \ y \leq n+2-i \}$$

\noindent
and define $E_2 \subseteq E(T_n)$ to be 

$$E_2 = \{\{(x,y),(x-1,y+1)\} \in E(T_n) : 2 \leq x \leq i+1, \ x+y \geq i+1, \ y \leq n+1-i \}.$$

Define $A_1 = E_1 \cup E_2$, and define $A_2$ to be $A_1$ rotated 120 degrees counter-clockwise about the center of $T_n$, and define $A_3$ to be $A_2$ rotated 120 degrees counter-clockwise about the center of $T_n$. We give an example of $A_1,A_2,A_3$ with $i = 3$ in $T_{12}$ below. \\

\begin{center}
\begin{tikzpicture}[declare function={a=0.4; f(\x,\y) = (\x + (\y / 2))*a ; g(\x,\y) =  (sqrt(3) * \y / 2)*a ; }]

\foreach \y in {0,...,12} 
\foreach \x in {0,...,\numexpr 12 - \y}
{
\filldraw[black] ({f(\x,\y)},{g(\x,\y)}) circle (1pt) ;
}

\foreach \y in {2,...,10}
{
\draw ({((\y / 2))*a},{(sqrt(3) * \y / 2)*a}) -- ({(1 + (\y / 2))*a},{(sqrt(3) * \y / 2)*a}) ;
}

\foreach \y in {1,...,10}
{
\draw ({(1+(\y / 2))*a},{(sqrt(3) * \y / 2)*a}) -- ({(2 + (\y / 2))*a},{(sqrt(3) * \y / 2)*a}) ;
}

\foreach \y in {0,...,9}
{
\draw ({(2+(\y / 2))*a},{(sqrt(3) * \y / 2)*a}) -- ({(3 + (\y / 2))*a},{(sqrt(3) * \y / 2)*a}) ;
}

\foreach \y in {1,...,9}
{
\draw ({(1+(\y / 2))*a},{(sqrt(3) * \y / 2)*a}) -- ({(((\y + 1) / 2))*a},{(sqrt(3) * (\y + 1) / 2)*a}) ;
}

\foreach \y in {0,...,9}
{
\draw ({(2+(\y / 2))*a},{(sqrt(3) * \y / 2)*a}) -- ({(1+((\y + 1) / 2))*a},{(sqrt(3) * (\y + 1) / 2)*a}) ;
}

\foreach \y in {0,...,9}
{
\draw ({(3+(\y / 2))*a},{(sqrt(3) * \y / 2)*a}) -- ({(2+((\y + 1) / 2))*a},{(sqrt(3) * (\y + 1) / 2)*a}) ;
}

\foreach \y in {0,...,12} 
\foreach \x in {0,...,\numexpr 12 - \y}
{
\filldraw[black] ({(\x + (\y / 2))*a + 14*a},{(sqrt(3) * \y / 2)*a}) circle (1pt) ;
}

\foreach \x in {2,...,10}
{
\draw ({f(\x,0) + 14*a},{g(\x,0)}) -- ({f(\x,1) + 14*a},{g(\x,1)}) ;
}

\foreach \x in {1,...,10}
{
\draw ({f(\x,1)+14*a},{g(\x,1)}) -- ({f(\x,2)+14*a},{g(\x,2)}) ;
}

\foreach \x in {0,...,9}
{
\draw ({f(\x,2)+14*a},{g(\x,2)}) -- ({f(\x,3) + 14*a},{g(\x,3)}) ;
}

\foreach \x in {2,...,10}
{
\draw ({f(\x,0) + 14*a},{g(\x,0)}) -- ({f(\x-1,1) + 14*a},{g(\x-1,1)}) ;
}

\foreach \x in {1,...,10}
{
\draw ({f(\x,1)+14*a},{g(\x,1)}) -- ({f(\x-1,2)+14*a},{g(\x-1,2)}) ;
}

\foreach \x in {1,...,10}
{
\draw ({f(\x,2)+14*a},{g(\x,2)}) -- ({f(\x-1,3) + 14*a},{g(\x-1,3)}) ;
}

\foreach \y in {0,...,12} 
\foreach \x in {0,...,\numexpr 12 - \y}
{
\filldraw[black] ({(\x + (\y / 2))*a + 28*a},{(sqrt(3) * \y / 2)*a}) circle (1pt) ;
}

\foreach \y in {0,...,9}
{
\draw ({f(9-\y,\y) + 28*a},{g(9-\y,\y)}) -- ({f(10-\y,\y) + 28*a},{g(10-\y,\y)}) ;
}

\foreach \y in {1,...,10}
{
\draw ({f(10-\y,\y) + 28*a},{g(10-\y,\y)}) -- ({f(11-\y,\y) + 28*a},{g(11-\y,\y)}) ;
}

\foreach \y in {2,...,10}
{
\draw ({f(11-\y,\y) + 28*a},{g(11-\y,\y)}) -- ({f(12-\y,\y) + 28*a},{g(12-\y,\y)}) ;
}

\foreach \y in {0,...,9}
{
\draw ({f(9-\y,\y) + 28*a},{g(9-\y,\y)}) -- ({f(9-\y,\y+1) + 28*a},{g(9-\y,\y+1)}) ;
}

\foreach \y in {0,...,9}
{
\draw ({f(10-\y,\y) + 28*a},{g(10-\y,\y)}) -- ({f(10-\y,\y+1) + 28*a},{g(10-\y,\y+1)}) ;
}

\foreach \y in {1,...,9}
{
\draw ({f(11-\y,\y) + 28*a},{g(11-\y,\y)}) -- ({f(11-\y,\y+1) + 28*a},{g(11-\y,\y+1)}) ;
}

\draw (6*a,0) node[yshift=-0.5cm]{$A_1$} ;

\draw (20*a,0) node[yshift=-0.5cm]{$A_2$} ;

\draw (34*a,0) node[yshift=-0.5cm]{$A_3$} ;
\end{tikzpicture}
\end{center}

We define $A = A_1 \triangle A_2 \triangle A_3$. Note $\{(i,1),(i+1,1)\} \in A$. We claim $A$ is a totally even subset. We first provide an equivalent but more useful way to describe $A_1$. Consider the equilateral triangle $R_1$ with points $(i+1,0),(i+1,n+2-i),(2i-n-1,n+2-i)$ in the coordinate system shown in Figure 3 (located in the Introduction). Let $A_1$ be the set of edges of the form $\{(x,y),(x+1,y)\}$ or $\{(x,y),(x-1,y+1)\}$ contained inside $R_1$, including its boundary. We show a picture of $A_1$ when $i = 3$ in $T_{12}$, and $R_1$ (dashed lines). 

\begin{center}
\begin{tikzpicture}[declare function={a=0.6; f(\x,\y) = (\x + (\y / 2))*a ; g(\x,\y) =  (sqrt(3) * \y / 2)*a ; }]

\foreach \y in {0,...,12} 
\foreach \x in {0,...,\numexpr 12 - \y}
{
\filldraw[black] ({f(\x,\y)},{g(\x,\y)}) circle (1pt) ;
}

\foreach \y in {2,...,10}
{
\draw ({((\y / 2))*a},{(sqrt(3) * \y / 2)*a}) -- ({(1 + (\y / 2))*a},{(sqrt(3) * \y / 2)*a}) ;
}

\foreach \y in {1,...,10}
{
\draw ({(1+(\y / 2))*a},{(sqrt(3) * \y / 2)*a}) -- ({(2 + (\y / 2))*a},{(sqrt(3) * \y / 2)*a}) ;
}

\foreach \y in {0,...,9}
{
\draw ({(2+(\y / 2))*a},{(sqrt(3) * \y / 2)*a}) -- ({(3 + (\y / 2))*a},{(sqrt(3) * \y / 2)*a}) ;
}

\foreach \y in {1,...,9}
{
\draw ({(1+(\y / 2))*a},{(sqrt(3) * \y / 2)*a}) -- ({(((\y + 1) / 2))*a},{(sqrt(3) * (\y + 1) / 2)*a}) ;
}

\foreach \y in {0,...,9}
{
\draw ({(2+(\y / 2))*a},{(sqrt(3) * \y / 2)*a}) -- ({(1+((\y + 1) / 2))*a},{(sqrt(3) * (\y + 1) / 2)*a}) ;
}

\foreach \y in {0,...,9}
{
\draw ({(3+(\y / 2))*a},{(sqrt(3) * \y / 2)*a}) -- ({(2+((\y + 1) / 2))*a},{(sqrt(3) * (\y + 1) / 2)*a}) ;
} 

\draw ({f(3,-1)},{g(3,-1)}) circle (1.5pt) ;
\draw ({f(3,10)},{g(3,10)}) circle (1.5pt) ;
\draw ({f(-8,10)},{g(-8,10)}) circle (1.5pt) ;

\draw[dashed] ({f(3,-1)},{g(3,-1)}) -- ({f(3,10)},{g(3,10)}) ;
\draw[dashed] ({f(3,10)},{g(3,10)}) -- ({f(-8,10)},{g(-8,10)}) ;
\draw[dashed] ({f(-8,10)},{g(-8,10)}) -- ({f(3,-1)},{g(3,-1)}) ;

\end{tikzpicture}
\end{center}

Let $R_2$ and $R_3$ be the analogous traingular regions corresponding to $A_2$ and $A_3$ respectively. So $A$ looks like

\begin{center}
\begin{tikzpicture}[declare function={a=0.6; f(\x,\y) = (\x + (\y / 2))*a ; g(\x,\y) =  (sqrt(3) * \y / 2)*a ; }]

\foreach \y in {0,...,12} 
\foreach \x in {0,...,\numexpr 12 - \y}
{
\filldraw[black] ({f(\x,\y)},{g(\x,\y)}) circle (1pt) ;
}

\foreach \y in {2,...,8}
{
\draw ({((\y / 2))*a},{(sqrt(3) * \y / 2)*a}) -- ({(1 + (\y / 2))*a},{(sqrt(3) * \y / 2)*a}) ;
}

\foreach \y in {1,...,7}
{
\draw ({(1+(\y / 2))*a},{(sqrt(3) * \y / 2)*a}) -- ({(2 + (\y / 2))*a},{(sqrt(3) * \y / 2)*a}) ;
}

\foreach \y in {0,...,6}
{
\draw ({(2+(\y / 2))*a},{(sqrt(3) * \y / 2)*a}) -- ({(3 + (\y / 2))*a},{(sqrt(3) * \y / 2)*a}) ;
}

\foreach \y in {3,...,9}
{
\draw ({(1+(\y / 2))*a},{(sqrt(3) * \y / 2)*a}) -- ({(((\y + 1) / 2))*a},{(sqrt(3) * (\y + 1) / 2)*a}) ;
}

\foreach \y in {3,...,9}
{
\draw ({(2+(\y / 2))*a},{(sqrt(3) * \y / 2)*a}) -- ({(1+((\y + 1) / 2))*a},{(sqrt(3) * (\y + 1) / 2)*a}) ;
}

\foreach \y in {3,...,9}
{
\draw ({(3+(\y / 2))*a},{(sqrt(3) * \y / 2)*a}) -- ({(2+((\y + 1) / 2))*a},{(sqrt(3) * (\y + 1) / 2)*a}) ;
} 

\draw ({f(3,-1)},{g(3,-1)}) circle (1.5pt) ;
\draw ({f(3,10)},{g(3,10)}) circle (1.5pt) ;
\draw ({f(-8,10)},{g(-8,10)}) circle (1.5pt) ;

\draw[dashed] ({f(3,-1)},{g(3,-1)}) -- ({f(3,10)},{g(3,10)}) ;
\draw[dashed] ({f(3,10)},{g(3,10)}) -- ({f(-8,10)},{g(-8,10)}) ;
\draw[dashed] ({f(-8,10)},{g(-8,10)}) -- ({f(3,-1)},{g(3,-1)}) ;

\foreach \x in {2,...,8}
{
\draw ({f(\x,0)},{g(\x,0)}) -- ({f(\x,1)},{g(\x,1)}) ;
}

\foreach \x in {1,...,7}
{
\draw ({f(\x,1)},{g(\x,1)}) -- ({f(\x,2)},{g(\x,2)}) ;
}

\foreach \x in {0,...,6}
{
\draw ({f(\x,2)},{g(\x,2)}) -- ({f(\x,3)},{g(\x,3)}) ;
}

\foreach \x in {4,...,10}
{
\draw ({f(\x,0)},{g(\x,0)}) -- ({f(\x-1,1)},{g(\x-1,1)}) ;
}

\foreach \x in {4,...,10}
{
\draw ({f(\x,1)},{g(\x,1)}) -- ({f(\x-1,2)},{g(\x-1,2)}) ;
}

\foreach \x in {4,...,10}
{
\draw ({f(\x,2)},{g(\x,2)}) -- ({f(\x-1,3)},{g(\x-1,3)}) ;
}

\draw ({f(10,-1)},{g(10,-1)}) circle (1.5pt) ;
\draw ({f(-1,10)},{g(-1,10)}) circle (1.5pt) ;
\draw ({f(10,10)},{g(10,10)}) circle (1.5pt) ;

\draw[dashed] ({f(10,-1)},{g(10,-1)}) -- ({f(-1,10)},{g(-1,10)}) ;
\draw[dashed] ({f(-1,10)},{g(-1,10)}) -- ({f(10,10)},{g(10,10)}) ;
\draw[dashed] ({f(10,10)},{g(10,10)}) -- ({f(10,-1)},{g(10,-1)}) ;

\foreach \y in {0,...,6}
{
\draw ({f(9-\y,\y)},{g(9-\y,\y)}) -- ({f(10-\y,\y)},{g(10-\y,\y)}) ;
}

\foreach \y in {1,...,7}
{
\draw ({f(10-\y,\y)},{g(10-\y,\y)}) -- ({f(11-\y,\y)},{g(11-\y,\y)}) ;
}

\foreach \y in {2,...,8}
{
\draw ({f(11-\y,\y)},{g(11-\y,\y)}) -- ({f(12-\y,\y)},{g(12-\y,\y)}) ;
}

\foreach \y in {3,...,9}
{
\draw ({f(9-\y,\y)},{g(9-\y,\y)}) -- ({f(9-\y,\y+1)},{g(9-\y,\y+1)}) ;
}

\foreach \y in {3,...,9}
{
\draw ({f(10-\y,\y)},{g(10-\y,\y)}) -- ({f(10-\y,\y+1)},{g(10-\y,\y+1)}) ;
}

\foreach \y in {3,...,9}
{
\draw ({f(11-\y,\y)},{g(11-\y,\y)}) -- ({f(11-\y,\y+1)},{g(11-\y,\y+1)}) ;
}

\draw ({f(-1,3)},{g(-1,3)}) circle (1.5pt) ;
\draw ({f(10,3)},{g(10,3)}) circle (1.5pt) ;
\draw ({f(10,-8)},{g(10,-8)}) circle (1.5pt) ;

\draw[dashed] ({f(-1,3)},{g(-1,3)}) -- ({f(10,3)},{g(10,3)}) ;
\draw[dashed] ({f(10,3)},{g(10,3)}) -- ({f(10,-8)},{g(10,-8)}) ;
\draw[dashed] ({f(10,-8)},{g(10,-8)}) -- ({f(-1,3)},{g(-1,3)}) ;

\draw ({f(-4,8)},{g(-4,8)}) node{$R_1$} ;
\draw ({f(8,8)},{g(8,8)}) node{$R_3$} ;
\draw ({f(8,-4)},{g(8,-4)}) node{$R_2$} ;

\end{tikzpicture}
\end{center}

In general, there will be $7$ regions of interest: $R_1 \setminus (R_2 \cup R_3)$, $R_2 \setminus (R_1 \cup R_3)$, $R_3 \setminus (R_1 \cup R_2)$, $R_1 \cap R_2$, $R_2 \cap R_3$, $R_3 \cap R_1$, and $R_1 \cap R_2 \cap R_3$. Note the region $R_1 \cap R_2 \cap R_3$ will be non-empty when $i \geq \lceil \frac{n}{2} \rceil$, however it will never contain any edges because the symmetric difference between $A_1,A_2,A_3$ in the region $R_1 \cap R_2 \cap R_3$ cancels out completely, since $A_1$ contains edges of the form $\{(x,y),(x+1,y)\},\{(x,y),(x-1,y+1)\}$ and $A_2$ contains edges of the form $\{(x,y),(x-1,y+1)\},\{(x,y),(x,y+1)\}$ and $A_3$ contains edges of the form $\{(x,y),(x,y+1)\},\{(x,y),(x+1,y)\}$. Likewise, the regions $R_1 \setminus (R_2 \cup R_3)$ and $R_2 \cap R_3$ contain edges of the form $\{(x,y),(x+1,y)\},\{(x,y),(x-1,y+1)\}$; the regions  $R_2 \setminus (R_1 \cup R_3)$ and $R_1 \cap R_3$ contain edges of the form $\{(x,y),(x-1,y+1)\},\{(x,y),(x,y+1)\}$; the regions $R_3 \setminus (R_1 \cup R_2)$ and $R_1 \cap R_2$ contain edges of the form $\{(x,y),(x,y+1)\},\{(x,y),(x+1,y)\}$. One can can check any vertex in each region contains an even number of edges of $A$, and every face has an even number of edges of $A$. \\

Thus given any $i \in \left[ \lfloor \frac{n}{2} \rfloor \right]$, there exists a totally even subset $A$ in $T_n$ such that $\mathbbm{1}_A(\{(i,1),(i+1,1)\}) = 1$ and $\mathbbm{1}_A(\{(i',1),(i'+1,1)\}) = 0$ for all $i' \in \left[ \lfloor \frac{n}{2} \rfloor \right] \setminus \{i\}$. Denote the set of such indicator vectors by $F$, so $|F| = \lfloor \frac{n}{2} \rfloor$. Note the vectors in $F$ are linearly independent since each vector in $F$ has a $1$ in an entry $\{(i,1),(i+1,1)\}$ for some $i \in \left[ \lfloor \frac{n}{2} \rfloor \right]$ while the other vectors have a $0$ at entry $\{(i,1),(i+1,1)\}$. Now we show they span the entire space of totally even subsets. By Theorem 2, the bottom side of a totally even subset uniquely determines it. By Theorem 1, a totally even subset is symmetric about the middle, and thus its bottom side is also symmetric about the middle, and thus the left half of the bottom side of a totally even subset uniquely determines it. We are done in the case $n$ is even, as you can create a totally even subset with any ``left half of the bottom side" using the vectors in $F$. In the case that $n$ is odd, by Corollary $1$ the edge at the bottom lying on the middle, $\{(\frac{n+1}{2},1),(\frac{n+1}{2}+1,1)\}$, cannot be in any totally even susbet, thus we may ignore this edge and only consider the first $\lfloor \frac{n}{2} \rfloor$ edges in the left half of the bottom side. 
\end{proof}

The preceding Theorem allows us to make the following definition and remark. 

\begin{definition}
For $i \in \left[ \lfloor \frac{n}{2} \rfloor \right]$, let $A(i)$ denote the totally even subset of $T_n$ containing exactly one edge of the bottom side of $T_n$, namely $\{(i,1),(i+1,1)\}$.
\end{definition}

\begin{remark}
Any totally even subset $A$ of $T_n$ can be decomposed into $A(i)$, that is, $A = A(i_1) \triangle A(i_2) \triangle \cdots \triangle A(i_k)$. From now on, when we write $A$ this way we also assume $i_s < i_t$ for all $s < t$ and $i_s \leq \frac{n}{2}$ for all $s \in [k]$.  
\end{remark}

\begin{center}
\section{Counting the Edges of Totally Even Subsets of $T_n$}
\end{center}

We first count the edges of the ``standard basis vectors," that is, totally even subsets that contain exactly one edge of the left half of the bottom side of $T_n$. \\

\begin{proposition}
In $T_n$, we have that $A(i)$ contains $6(n-2i+1)i$ edges. 
\end{proposition}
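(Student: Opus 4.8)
The plan is to start from the explicit description of $A(i)$ furnished by the proof of Theorem 3: $A(i) = A_1 \triangle A_2 \triangle A_3$, where $A_1 = E_1 \cup E_2$ consists of the edges of $E(T_n)$ of the forms $\{(x,y),(x+1,y)\}$ and $\{(x,y),(x-1,y+1)\}$ lying in the triangular region $R_1$, and $A_2, A_3$ are the rotations of $A_1$ by $120^\circ$ and $240^\circ$ about the centre of $T_n$. Since $A_1$ contains no edge of the form $\{(x,y),(x,y+1)\}$, and a $120^\circ$ rotation cyclically permutes the three edge directions of the triangular grid, $A_2$ avoids one of the two remaining directions and $A_3$ avoids the third; in particular $A_1 \cap A_2 \cap A_3 = \emptyset$. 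Substituting this into the inclusion--exclusion identity $|A_1 \triangle A_2 \triangle A_3| = \sum_j |A_j| - 2\sum_{j<k} |A_j \cap A_k| + 4|A_1 \cap A_2 \cap A_3|$ and using that the rotational symmetry forces all three $|A_j|$ to be equal and all three $|A_j \cap A_k|$ to be equal, we obtain
$$|A(i)| = 3|A_1| - 6\,|A_1 \cap A_2|.$$
So it remains only to evaluate $|A_1|$ and $|A_1 \cap A_2|$.

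For $|A_1|$: the union $A_1 = E_1 \cup E_2$ is disjoint, since its two parts consist of edges in different directions, so $|A_1| = |E_1| + |E_2|$, and I would compute each term by summing over rows. For a fixed $y$, the defining inequalities of $E_1$ (namely $1 \le x \le i$, $x+y \ge i+1$, $y \le n+2-i$) together with the requirement that $\{(x,y),(x+1,y)\}$ actually be an edge of $T_n$ (i.e.\ $1 \le x$ and $x+1 \le n+2-y$) cut out an interval of admissible $x$-values whose length is a piecewise-linear function of $y$, with break-points governed by $y = i$ and $y = n+2-i$; summing these lengths over $y$ telescopes to a quadratic in $n$ and $i$. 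The same is done for $E_2$. The outcome is a closed form of the shape $|A_1| = 2in - 3i^2 + 5i - 2$.

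For $|A_1 \cap A_2|$: the only edge direction common to $A_1$ and $A_2$ is $\{(x,y),(x-1,y+1)\}$, and $A_1$ contains every edge of that type in $R_1$ while $A_2$ contains every one in $R_2$, so $A_1 \cap A_2$ is exactly the set of edges of $T_n$ of this type lying in $R_1 \cap R_2$. Writing the relevant $120^\circ$ rotation in coordinates (in suitable coordinates it is the affine map $(x,y) \mapsto (n+3-x-y,\,x)$) turns $A_2$'s edges of this type into the image of $E_1$, so $A_1 \cap A_2$ is the intersection of $E_2$ with that image; after simplifying the inequalities this becomes the clean set $\{\,\{(X,Y),(X-1,Y+1)\} : 1 \le Y \le i,\ 2 \le X \le i+1,\ X+Y \ge i+1\,\}$, and a short double sum evaluates it to $|A_1 \cap A_2| = \tfrac{1}{2}(i^2 + 3i - 2)$, which is notably independent of $n$. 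Substituting both counts into $3|A_1| - 6|A_1 \cap A_2|$ and simplifying yields $6(n - 2i + 1)i$.

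The step I expect to be the main obstacle is the row-by-row count of $|E_1|$ and $|E_2|$: the triangle $R_1$ has side $n+2-i$ and genuinely pokes outside $T_n$, both below the bottom side and past the left corner, so the sums must be broken into several ranges of $y$, and one has to check that the resulting closed forms hold uniformly for every $1 \le i \le \lfloor n/2 \rfloor$ — in particular at the extreme value $i = \lfloor n/2 \rfloor$, where $R_1, R_2, R_3$ overlap the most (and $R_1 \cap R_2 \cap R_3$ can be nonempty, although by the argument above it still contains no edge). Once those interval lengths are pinned down, the remaining algebra is routine.
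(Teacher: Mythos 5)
Your proposal is correct, and it takes a genuinely different route from the paper. The paper re-decomposes $A(i)$ as $B_1 \triangle \cdots \triangle B_6$, where the $B_s$ are six pairwise disjoint, mutually congruent parallelogram-shaped edge sets each of size $(n-2i+1)i$, so the count is immediate once disjointness and the identity $A_1\triangle A_2\triangle A_3 = B_1\triangle\cdots\triangle B_6$ are checked. You instead work directly with $A_1\triangle A_2\triangle A_3$ via inclusion--exclusion: your observation that each $A_j$ omits exactly one of the three edge directions (so $A_1\cap A_2\cap A_3=\emptyset$), together with the rotational symmetry, correctly reduces the problem to $3|A_1|-6|A_1\cap A_2|$, and your closed forms check out --- I verified that $|E_1|=|E_2|=in-\tfrac{3}{2}i^2+\tfrac{5}{2}i-1$ (so $|A_1|=2in-3i^2+5i-2$), that $A_1\cap A_2$ is exactly the set of edges $\{(X,Y),(X-1,Y+1)\}$ with $1\le Y\le i$, $2\le X\le i+1$, $X+Y\ge i+1$ (e.g.\ via the rotation $(x,y)\mapsto(n+3-x-y,x)$), of size $\tfrac{1}{2}(i^2+3i-2)$, and that the algebra yields $6(n-2i+1)i$; I also confirmed the counts against small cases such as $n=2,i=1$ and $n=4,i=2$. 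What your approach buys is that it avoids the paper's unproved assertions that the six $B_s$ are pairwise disjoint and that their symmetric difference equals $A_1\triangle A_2\triangle A_3$ (which the paper leaves as ``one can check''), at the cost of fiddlier lattice-point counts since $R_1$ pokes outside $T_n$ --- a difficulty you correctly flag. What the paper's approach buys is reusability: the parallelogram decomposition into the $B_s$ is the engine of Theorem 4's checkerboard count for general totally even subsets, so your route, while self-contained for this proposition, would not set up that later argument.
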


\begin{proof}
Recall in the proof of Theorem 3 we defined the edge sets $A_1,A_2,A_3$ and defined $A = A(i) = A_1 \triangle A_2 \triangle A_3$. Note, however, that $A_1,A_2,A_3$ cancelled out at some places, so to count the number of edges of $A(i)$ we can't just count the edges of $A_1,A_2,A_3$ and sum them. We describe another way to construct $A(i)$, in particular we will decompose $A(i) = B_1 \triangle B_2 \triangle B_3 \triangle B_4 \triangle B_5 \triangle B_6$ where the embeddings of $B_s,B_t$ are isometric for all $s,t \in [6]$, so in particular $|B_s| = |B_t|$ for all $s,t \in [6]$, and also $B_s \cap B_t = \{\}$ for all $s,t \in [6]$. \\

Define $B_1$ to be

$$B_1 = \{\{(x,y),(x,y+1)\} : 1 \leq y \leq i, \ i+1 \leq x + y \leq n-i\}.$$

\noindent
Another way to describe $B_1$ is to take all the edges of the form $\{(x,y),(x,y+1)\}$ in the parallelogram $R_1$ with points $i,1),(n-i+1,1),(0,i+1),(n-2i+1,i+1)$ in the coordinate system shown in Figure 3. We reuse the example of $T_{12}$ and $i = 3$ from Theorem 3. \\

\begin{center}
\begin{tikzpicture}[declare function={a=0.6; f(\x,\y) = (\x + (\y / 2))*a ; g(\x,\y) =  (sqrt(3) * \y / 2)*a ; }]

\foreach \y in {0,...,12} 
\foreach \x in {0,...,\numexpr 12 - \y}
{
\filldraw[black] ({f(\x,\y)},{g(\x,\y)}) circle (1pt) ;
}

\draw ({f(-1,3)},{g(-1,3)}) circle (1.5pt) ;
\draw ({f(6,3)},{g(6,3)}) circle (1.5pt) ;
\draw ({f(2,0)},{g(2,0)}) circle (1.5pt) ;
\draw ({f(9,0)},{g(9,0)}) circle (1.5pt) ;

\draw[dashed]  ({f(-1,3)},{g(-1,3)}) -- ({f(6,3)},{g(6,3)}) ;
\draw[dashed]  ({f(9,0)},{g(9,0)}) -- ({f(6,3)},{g(6,3)}) ;
\draw[dashed]  ({f(9,0)},{g(9,0)}) -- ({f(2,0)},{g(2,0)}) ;
\draw[dashed]  ({f(-1,3)},{g(-1,3)}) -- ({f(2,0)},{g(2,0)}) ;

\foreach \y in {0,...,2}
\foreach \x in {0,...,6}
{
\draw ({f({\x + 2 - \y},\y)},{g({\x + 2 - \y},\y)}) -- ({f({\x + 2 - \y},{\y + 1})},{g({\x + 2 - \y},{\y + 1})}) ;
}

\draw (6*a,-1) node{\Large $R_1$} ;

\end{tikzpicture}
\end{center}

Note if the length of an edge is $1$, then the product of the side lengths of two adjacent sides in the parallelogram $R_1$ is the number of edges it contains. Note $R_1$ has side length $n-2i+1$ at the bottom and length $i$ at the left, so it contains $(n-2i+1)i$ edges. We define $B_2$ to be the reflection of $B_1$ about the middle. Let $B_3$ be $B_1$ rotated 120 degrees counter-clockwise about the center of $T_n$, and $B_5$ be $B_3$ rotated 120 degrees counter-clockwise about the center of $T_n$. Likewise, $B_4$ is $B_2$ rotated 120 degrees counter-clockwise about the center of $T_n$, and $B_6$ is $B_4$ rotated 120 degrees counter-clockwise about the center of $T_n$. Note if $s \equiv t \mod 2$ and $s \neq t$, then $B_s \cap B_t = \{\}$ because $B_s$ and $B_t$ are rotations of each other and so the edges they contain will be ``going in different directions." For example, one may have edges of the form $\{(x,y),(x+1,y)\}$ and the other may have edges of the form $\{(x,y),(x,y+1)\}$ (note each $B_r$ for $r \in [6]$ has edges ``only going in one of the three directions"), so they can't possibly cancel out anywhere. We have $B_1 \cap B_2 = \{\}$ because $B_1$ and $B_2$ have edges going in different directions as they are reflections of each other. Likewise, $B_1 \cap B_6 = \{\}$ because $B_1$ and $B_6$ have edges going in different directions. If we let $R_s$ be the parallelogram region corresponding to $B_s$ for $s \in [6]$, note $B_1 \cap B_4 = \{\}$ because $R_1 \cap R_4$ intersects only at a single point. One can show $B_s \cap B_t = \{\}$ for other $s \neq t$ using analogous arguments. Note $|B_s| = |B_t|$ for all $s,t \in [6]$ by construction. One can check that $A_1 \triangle A_2 \triangle A_3 = B_1 \triangle B_2 \triangle B_3 \triangle B_4 \triangle B_5 \triangle B_6$, where $A_1,A_2,A_3$ are defined in Theorem 3. \\

Thus if $A(i) = A_1 \triangle A_2 \triangle A_3 = B_1 \triangle B_2 \triangle B_3 \triangle B_4 \triangle B_5 \triangle B_6$ and all the $B_s$ are pairwise disjoint, then $|A(i)| = \sum_{s \in [6]} |B_s| = 6|B_1| = 6(n-2i+1)i$, as required. 
\end{proof}

\begin{theorem}
Let $A$ be a totally even subset of $T_n$, and let $A = A(i_1) \triangle A(i_2) \triangle \cdots \triangle A(i_k)$ where $k > 0$. Furthermore, let $i_0 = 0$ and $i_{k+1} = \frac{n+1}{2}$. Let $a_s = i_{s+1} - i_s$ for $s \in \{0\} \cup [k]$. Then $A$ contains

$$12 \cdot \left(\sum_{s \in \{0\} \cup [k], \ s \text{ even}} a_s \right) \cdot \left(\sum_{s \in \{0\} \cup [k], \ s \text{ odd}} a_s \right)$$

\noindent
edges. 
\end{theorem}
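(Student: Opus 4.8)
The plan is to leverage the decomposition established in the proof of Proposition 1: $A(i)=B_1(i)\triangle\cdots\triangle B_6(i)$, where the six sets $B_r(i)$ are pairwise disjoint, are carried to one another by the six symmetries of $T_n$, and $B_1(i)$ consists of the vertical edges $\{(x,y),(x,y+1)\}$ with $y\le i\le\min(x+y-1,\,n+1-x-y)$ (the parallelogram described there). Writing $A=A(i_1)\triangle\cdots\triangle A(i_k)$ and commuting the symmetric differences gives $A=C_1\triangle\cdots\triangle C_6$ with $C_r:=B_r(i_1)\triangle\cdots\triangle B_r(i_k)$. Since each symmetry of $T_n$ permutes $B_1(i),\dots,B_6(i)$ by the same permutation for every $i$, each $C_r$ is the image of $C_1$ under a symmetry of $T_n$, so $|C_r|=|C_1|$ for all $r$. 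Hence if the $C_r$ are pairwise disjoint then $|A|=6|C_1|$, and the theorem reduces to showing
\[
|C_1|=2\Bigl(\sum_{s\text{ even}}a_s\Bigr)\Bigl(\sum_{s\text{ odd}}a_s\Bigr).
\]

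To get pairwise disjointness, note the six $B_r$ split into three pairs sharing a common edge-direction (the ``vertical'' pair being $B_1,B_4$), so two $C_r$ of different directions are automatically disjoint; tracking the symmetry group shows that all three ``same-direction'' disjointness statements are equivalent to the single statement $C_1\cap C_4=\varnothing$. For this it suffices to check $B_1(i_a)\cap B_4(i_b)=\varnothing$ for all $a,b\in[k]$. The symmetry carrying $B_1$ to $B_4$ is the reflection through the bottom-right vertex, which acts on coordinates by $(x,y)\mapsto(x,\,n+3-x-y)$; substituting into the membership condition above, a vertical edge lying in both $B_1(i_a)$ and $B_4(i_b)$ would have to satisfy $y\le i_a$ together with $y\ge i_b+1$ (forcing $i_b<i_a$), and also $x+y\le n+1-i_a$ together with $x+y\ge n+2-i_b$ (forcing $i_a<i_b$), a contradiction. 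So $|A|=6|C_1|$.

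To count $C_1$: a vertical edge $e=\{(x,y),(x,y+1)\}$ lies in $B_1(i)$ exactly for $i$ in the interval $[\,y,\,m(e)\,]$ with $m(e)=\min(x+y-1,\,n+1-x-y)$, so $e\in C_1$ iff $\#\{j:i_j\in[y,m(e)]\}$ is odd, and when $m(e)\ge y$ this parity equals $\chi(m(e))+\chi(y-1)\pmod 2$, where $\chi(t):=\#\{j:i_j\le t\}\bmod 2$ is the parity of the index of the segment $[i_s,i_{s+1})$ containing $t$. For each fixed $y\in\{1,\dots,\lfloor n/2\rfloor\}$, as $x$ runs over $\{1,\dots,n+1-2y\}$ the value $m(e)$ runs over $\{y,y+1,\dots,\lfloor n/2\rfloor\}$, hitting each value twice except that for $n$ even the top value $n/2$ is hit once. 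Summing $\mathbbm{1}[\chi(m(e))\ne\chi(y-1)]$ and re-indexing by $u=y-1$, the doubled values contribute $2\sum_{0\le u<v}\mathbbm{1}[\chi(u)\ne\chi(v)]=2PQ$, where $(u,v)$ ranges over an interval of consecutive integers and $P,Q$ count the integers of that interval lying in even- and odd-indexed segments; for $n$ even there is one additional term from the multiplicity-one value.

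A short case check according to the parities of $n$ and $k$ finishes the proof: for $n$ odd all the $i_s$ and $i_{k+1}=\tfrac{n+1}{2}$ are integers and $[i_s,i_{s+1})$ contains exactly $a_s$ integers, so $P=\sum_{s\text{ even}}a_s$ and $Q=\sum_{s\text{ odd}}a_s$ and $|C_1|=2PQ$ is immediate; for $n$ even the half-integer $i_{k+1}$ makes exactly one of $P,Q$ fall $\tfrac12$ short of the corresponding sum, but this defect is precisely cancelled by the extra multiplicity-one term, again yielding $|C_1|=2\bigl(\sum_{\text{even}}a_s\bigr)\bigl(\sum_{\text{odd}}a_s\bigr)$. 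Then $|A|=6|C_1|=12\bigl(\sum_{s\text{ even}}a_s\bigr)\bigl(\sum_{s\text{ odd}}a_s\bigr)$. The one genuinely fiddly point is this last parity bookkeeping — the interplay between the multiplicity-one centre of the ``tent'' and the half-integral final piece $a_k$ — while the disjointness of the $C_r$ and the reduction $|A|=6|C_1|$ follow cleanly from Proposition 1 and the symmetry of $T_n$.
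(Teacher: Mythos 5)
Your proof is correct and follows essentially the same route as the paper: decompose $A$ into the six direction-classes $C_r$ coming from the $B_r$ of Proposition 1, verify their pairwise disjointness (with $C_1\cap C_4$ as the one nontrivial case, exactly as in the paper), and conclude $|A|=6|C_1|$. The only difference is that where the paper counts $|C_1|$ by exhibiting the checkerboard of overlapping parallelograms in a worked example, you carry out the same parity count explicitly via the function $\chi$, including the even-$n$ boundary bookkeeping that the paper glosses over --- a more rigorous rendering of the same argument.
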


\begin{proof}
We first begin with an example to illustrate a particular fact, and then using that fact we discuss the general case. Consider $T_{12}$ again, and let $A = A(2) \triangle A(3) \triangle A(6)$ so that $i_0 = 0, i_1 = 2, i_2 = 3, i_3 = 6, i_4 = \frac{13}{2}$. Thus $a_0 = 2, a_1 = 1, a_2 = 3, a_3 = \frac{1}{2}$. Note $|A| = |A(2) \triangle A(3) \triangle A(6)|$. \\

Recall $B_1$ defined in Proposition 1: it suffices to count the number of edges in the symmetric difference of the three $B_1$ corresponding to each of $A(2),A(3),A(6)$, and then multiply the result by 6 to get $|A(2) \triangle A(3) \triangle A(6)|$ (we will justify this later). To further simplify things even more, we can just consider the left half of the symmetric difference of the three $B_1$: consider the picture below. \\

\begin{center}
\begin{tikzpicture}[declare function={a=1; f(\x,\y) = (\x + (\y / 2))*a ; g(\x,\y) =  (sqrt(3) * \y / 2)*a ; }]

\foreach \y in {0,...,12} 
\foreach \x in {0,...,\numexpr 12 - \y}
{
\draw ({f(\x,\y)},{g(\x,\y)}) coordinate (\x\y) ;
\filldraw[black] ({f(\x,\y)},{g(\x,\y)}) circle (1.5pt) ;
}

\draw ({f(-1,3)},{g(-1,3)}) circle (2pt) ;
\draw ({f(6,3)},{g(6,3)}) circle (2pt) ;
\draw ({f(2,0)},{g(2,0)}) circle (2pt) ;
\draw ({f(9,0)},{g(9,0)}) circle (2pt) ;

\draw[dashed]  ({f(-1,3)},{g(-1,3)}) -- ({f(6,3)},{g(6,3)}) ;
\draw[dashed]  ({f(9,0)},{g(9,0)}) -- ({f(6,3)},{g(6,3)}) ;
\draw[dashed]  ({f(9,0)},{g(9,0)}) -- ({f(2,0)},{g(2,0)}) ;
\draw[dashed]  ({f(-1,3)},{g(-1,3)}) -- ({f(2,0)},{g(2,0)}) ;

\draw ({f(-1,2)},{g(-1,2)}) circle (2pt) ;
\draw ({f(8,2)},{g(8,2)}) circle (2pt) ;
\draw ({f(1,0)},{g(1,0)}) circle (2pt) ;
\draw ({f(10,0)},{g(10,0)}) circle (2pt) ;

\draw[dashed]  ({f(-1,2)},{g(-1,2)}) -- ({f(8,2)},{g(8,2)}) ;
\draw[dashed]  ({f(10,0)},{g(10,0)}) -- ({f(8,2)},{g(8,2)}) ;
\draw[dashed]  ({f(10,0)},{g(10,0)}) -- ({f(1,0)},{g(1,0)}) ;
\draw[dashed]  ({f(-1,2)},{g(-1,2)}) -- ({f(1,0)},{g(1,0)}) ;

\draw ({f(-1,6)},{g(-1,6)}) circle (2pt) ;
\draw ({f(0,6)},{g(0,6)}) circle (2pt) ;
\draw ({f(5,0)},{g(5,0)}) circle (2pt) ;
\draw ({f(6,0)},{g(6,0)}) circle (2pt) ;

\draw[dashed]  ({f(-1,6)},{g(-1,6)}) -- ({f(0,6)},{g(0,6)}) ;
\draw[dashed]  ({f(6,0)},{g(6,0)}) -- ({f(0,6)},{g(0,6)}) ;
\draw[dashed]  ({f(6,0)},{g(6,0)}) -- ({f(5,0)},{g(5,0)}) ;
\draw[dashed]  ({f(-1,6)},{g(-1,6)}) -- ({f(5,0)},{g(5,0)}) ;

\draw (10) -- (11) ;
\draw (01) -- (02) ;
\draw (02) -- (03) ;
\draw (12) -- (13) ;
\draw (22) -- (23) ;
\draw (23) -- (24) ;
\draw (14) -- (15) ;
\draw (05) -- (06) ;
\draw (50) -- (51) ;
\draw (41) -- (42) ;
\draw (42) -- (43) ;
\draw (52) -- (53) ;
\draw (62) -- (63) ;
\draw (81) -- (82) ;
\draw (90) -- (91) ;

\draw[dotted] (5.5,0) -- ({f(-6.5,12)},{g(-6.5,12)}) ;

\draw ({f(-1,0)},{g(-1,0)}) coordinate (start) ;
\draw (start) circle (2pt) ;

\draw (5.5,0) coordinate (end) ;
\draw (end) circle (2pt) ;

\draw (start) node[yshift=-0.4cm]{\large $i_0$} ;
\draw (10) node[yshift=-0.4cm]{\large $i_1$} ;
\draw (20) node[yshift=-0.4cm]{\large $i_2$} ;
\draw (50) node[yshift=-0.4cm]{\large $i_3$} ;
\draw (end) node[yshift=-0.4cm]{\large $i_4$} ;

\draw[decorate,decoration={brace,raise=3pt,amplitude=4pt,mirror}] (-1,-0.6) -- (1,-0.6) node[midway,yshift=-0.5cm]{\large $a_0$} ;

\draw[decorate,decoration={brace,raise=3pt,amplitude=4pt,mirror}] (1,-0.6) -- (2,-0.6) node[midway,yshift=-0.5cm]{\large $a_1$} ;

\draw[decorate,decoration={brace,raise=3pt,amplitude=4pt,mirror}] (2,-0.6) -- (5,-0.6) node[midway,yshift=-0.5cm]{\large $a_2$} ;

\draw[decorate,decoration={brace,raise=3pt,amplitude=4pt,mirror}] (5,-0.6) -- (5.5,-0.6) node[midway,yshift=-0.5cm]{\large $a_3$} ;

\draw[decorate,decoration={brace,raise=3pt,amplitude=4pt}] (start) -- ({f(-1,2)},{g(-1,2)}) node[midway,xshift=-0.5cm,yshift=0.3cm]{\large $a_0$} ;

\draw[decorate,decoration={brace,raise=3pt,amplitude=4pt}] ({f(-1,2)},{g(-1,2)}) -- ({f(-1,3)},{g(-1,3)}) node[midway,xshift=-0.5cm,yshift=0.3cm]{\large $a_1$} ;

\draw[decorate,decoration={brace,raise=3pt,amplitude=4pt}] ({f(-1,3)},{g(-1,3)}) -- ({f(-1,6)},{g(-1,6)}) node[midway,xshift=-0.5cm,yshift=0.3cm]{\large $a_2$} ;

\draw (82) node[xshift=0.5cm,yshift=0.3cm]{\Large $A(2)$} ;
\draw (63) node[xshift=0.5cm,yshift=0.3cm]{\Large $A(3)$} ;
\draw (06) node[xshift=0.5cm,yshift=0.3cm]{\Large $A(6)$} ;
\end{tikzpicture}
\end{center}

As in Proposition 1, each $B_1$ corresponds to a parallelogram, and in the picture above I labelled at the top-right point of each parallelogram their corresponding $A(i)$ for $i \in \{2,3,6\}$. Recall in Proposition 1 that we counted the edges in a parallelogram by multiplying the side lengths of two adjacent sides of the parallelogram. Also note the parallelogram regions which contain edges are precisely those regions with an odd number of $B_1$ overlapping in those regions by definition of symmetric difference. Thus we get a checkerboard pattern, where the region corresponding to $a_sa_t$ for $s,t \in \{0\} \cup [k]$ has edges if and only if $s$ and $t$ have different parities. \\

In general, for $A = A(i_1) \triangle A(i_2) \triangle \ldots \triangle A(i_k)$ where $k > 0$, we take the symmetric difference of the $B_1$ corresponding to each $A(i_s)$ and count how many edges are in the ``left half" of the symmetric difference. By the example above, this number is

$$\left(\sum_{s \in \{0\} \cup [k], \ s \text{ even}} a_s \right) \cdot \left(\sum_{s \in \{0\} \cup [k], \ s \text{ odd}} a_s \right).$$

\noindent
Since this is only the left half of the symmetric difference of the $B_1$, the total number of edges in the symmetric difference of the $B_1$ is

$$2 \cdot \left(\sum_{s \in \{0\} \cup [k], \ s \text{ even}} a_s \right) \cdot \left(\sum_{s \in \{0\} \cup [k], \ s \text{ odd}} a_s \right).$$

\noindent

We do the same thing for $B_2,B_3,B_4,B_5,B_6$. We claim the symmetric difference of all $B_s$ and the symmetric difference of all $B_t$ are disjoint for $s \neq t$ and $s,t \in [6]$, which can be shown using similar arguments in Proposition 1 that argued $B_s \cap B_t = \emptyset$ for $s \neq t$ and $s,t \in [6]$. One argument in particular we want to elaborate a bit is the case when $s = 1$ and $t = 4$. In this case note that each $B_1$ lies below the line containing the vertex $(n+1,1)$ and the midpoint of vertices $(n,1),(n,2)$, while each $B_4$ lies above the line containing the vertex $(n+1,1)$ and the midpoint of vertices $(n,1),(n,2)$, and thus the symmetric difference of all $B_1$ and the symmetric of difference of all $B_4$ are disjoint. This way to count the total number of edges in $A$, we simply have to sum for $s \in [6]$ the number of edges in the symmetric difference of all $B_s$ (one $B_s$ for each $A(i_t)$ for $t \in [k]$). By symmetries, the number of edges in the symmetric difference of all $B_s$ equals the number of edges in the symmetric difference of all $B_1$. Thus we get that the number of edges in $A$ is 

$$12 \cdot \left(\sum_{s \in \{0\} \cup [k], \ s \text{ even}} a_s \right) \cdot \left(\sum_{s \in \{0\} \cup [k], \ s \text{ odd}} a_s \right)$$

\noindent
as required.
\end{proof}

\begin{center}
\section{Transversals}
\end{center}
We begin with an idea of Beluhov \cite{bel}. Let $G$ be a plane graph. Let $C_1,C_2$ be the edge sets of two cycles in $G$ with the same signature. As shown in the introduction, $A = C_1 \triangle C_2$ is a totally even subset. Note that each face in $G$ contains the same number of edges in $A \cap C_1$ as in $A \cap C_2$ (that this holds true for the infinite face, that is, the outside face, requires some work to show. See \cite{bel}). For each finite face in $G$, pair up the edges it contains in $A \cap C_1$ with the edges it contains in $A \cap C_2$ (which is possible by the preceding sentence), and for each pair of edges, connect them with a curve that begins at the midpoint of one edge and ends at the midpoint of the other. Do this in such a way such that the curves remain strictly inside the face, and doesn't intersect with any other curve. The following procedure shows this is possible: note that deleting any two edges of a face will split the face into two paths (the edges do not include the vertices, so that if we delete two incident edges, the vertex at which they are incident does not get deleted and will count as a path of length $0$). Find one edge in $A \cap C_1$ and one edge in $A \cap C_2$ such that when they are deleted, for one of the two paths, there is no edge in $A$ on it. Pair these two edges up, and now treat them as if they were not in $A$, and repeat the process (find another pair of edges so that no edge in $A$ is in between them on one of the paths of the face and so on). \\

If you look at the resulting curves, you see a union of disjoint paths and cycles if you treat the midpoints of edges in $A$ as the vertex set of a new graph $G'$. This is because in this new graph all vertices (midpoints of edges in $A$) either have degree $2$ in the case that its corresponding edge in $G$ is contained in two finite faces, or degree $1$ in the case that its corresponding edge in $G$ is contained in one finite face. We call each path and cycle in $G'$ a \textbf{transversal} of $G$. \\

In the case that $G = T_n$, our curves can simply be straight line segments. Furthermore, in $T_n$ a transversal is unique since each finite face has three edges, so there can be at most one pair of edges in any finite face, and therefore it makes sense to talk about \textit{the} transversal of the symmetric difference between two cycles of the same signature in $T_n$. We give an example of a transversal of the symmetric difference of the two cycles in Figure 1 (located at the introduction). Note the symmetric difference between the two cycles in Figure 1 is $A(2)$. \\ 

\begin{center}
\begin{tikzpicture}[declare function={a=0.75;}]
\foreach \y in {0,...,5} 
\foreach \x in {0,...,\numexpr 5 - \y}
{
\draw ({(\x + (\y / 2))*a},{(sqrt(3) * \y / 2)*a}) coordinate (\x\y) ;
\filldraw[black] ({(\x + (\y / 2))*a},{(sqrt(3) * \y / 2)*a}) circle (2pt) ;
}

\draw[very thick] (10) -- (20) ;
\draw[very thick] (30) -- (40) ;
\draw[very thick] (10) -- (11) ;
\draw[very thick] (20) -- (21) ;
\draw[very thick] (30) -- (21) ;
\draw[very thick] (40) -- (31) ;
\draw[very thick] (11) -- (21) ;
\draw[very thick] (21) -- (31) ;
\draw[very thick] (01) -- (11) ;
\draw[very thick] (01) -- (02) ;
\draw[very thick] (02) -- (12) ;
\draw[very thick] (12) -- (11) ;
\draw[very thick] (31) -- (22) ;
\draw[very thick] (22) -- (32) ;
\draw[very thick] (32) -- (41) ;
\draw[very thick] (41) -- (31) ;
\draw[very thick] (12) -- (03) ;
\draw[very thick] (03) -- (04) ;
\draw[very thick] (04) -- (13) ;
\draw[very thick] (13) -- (12) ;
\draw[very thick] (22) -- (13) ;
\draw[very thick] (13) -- (14) ;
\draw[very thick] (14) -- (23) ;
\draw[very thick] (23) -- (22) ;

\draw ($(01)!0.5!(02)$) -- ($(01)!0.5!(11)$) ;
\draw ($(01)!0.5!(11)$) -- ($(10)!0.5!(11)$) ;
\draw ($(10)!0.5!(11)$) -- ($(10)!0.5!(20)$) ;
\draw ($(11)!0.5!(21)$) -- ($(20)!0.5!(21)$) ;
\draw ($(20)!0.5!(21)$) -- ($(30)!0.5!(21)$) ;
\draw ($(30)!0.5!(21)$) -- ($(21)!0.5!(31)$) ;
\draw ($(21)!0.5!(31)$) -- ($(31)!0.5!(22)$) ;
\draw ($(22)!0.5!(31)$) -- ($(22)!0.5!(32)$) ;
\draw ($(22)!0.5!(32)$) -- ($(22)!0.5!(23)$) ;
\draw ($(22)!0.5!(23)$) -- ($(22)!0.5!(13)$) ;
\draw ($(22)!0.5!(13)$) -- ($(12)!0.5!(03)$) ;
\draw ($(12)!0.5!(03)$) -- ($(12)!0.5!(02)$) ;
\draw ($(12)!0.5!(02)$) -- ($(11)!0.5!(12)$) ;
\draw ($(11)!0.5!(12)$) -- ($(11)!0.5!(21)$) ;
\draw ($(03)!0.5!(04)$) -- ($(13)!0.5!(04)$) ;
\draw ($(13)!0.5!(04)$) -- ($(13)!0.5!(14)$) ;
\draw ($(13)!0.5!(14)$) -- ($(23)!0.5!(14)$) ;
\draw ($(30)!0.5!(40)$) -- ($(40)!0.5!(31)$) ;
\draw ($(40)!0.5!(31)$) -- ($(31)!0.5!(41)$) ;
\draw ($(31)!0.5!(41)$) -- ($(41)!0.5!(32)$) ;
\end{tikzpicture}

\bigskip
Figure 6
\end{center}

\noindent
The edges of $A(2)$ are the thicker line segments, while the transversals are thinner. The following proposition was proven in \cite{bel}. \\

\begin{proposition}[Beluhov]
The size of the symmetric difference $A$ of of two cycles $C_1,C_2$ with the same signature in a plane graph is always divisible by $4$. 
\end{proposition}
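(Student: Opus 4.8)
The plan is to work with the transversal graph $G'$ just built for $A = C_1\triangle C_2$. Note first that $A\cap C_1 = C_1\setminus C_2$ and $A\cap C_2 = C_2\setminus C_1$ are disjoint and partition $A$. Color a vertex of $G'$ (a midpoint of an edge of $A$) \emph{red} if its edge lies in $C_1$ and \emph{blue} if its edge lies in $C_2$. Every curve comprising $G'$ is drawn strictly inside a single face and joins an edge of $A\cap C_1$ to an edge of $A\cap C_2$, so this is a proper $2$-coloring of $G'$; in particular the colors alternate along each connected component of $G'$, which (as $G'$ has maximum degree $2$) is a path or a cycle. Since $|A| = |V(G')|$ equals the sum over all components of their numbers of vertices, it suffices to show that each component of $G'$ has a number of vertices divisible by $4$.

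The key topological input is that a closed curve in the plane meets a Jordan curve an even number of times, as long as they meet transversally in finitely many points: each such crossing switches which side of the Jordan curve the closed curve is on, and a loop returns to its starting side. Since $C_1$ is a (simple) cycle, its image is a Jordan curve. Consider a cycle component $\kappa$ of $G'$; it is drawn as a closed curve which, by the way the transversal is drawn — strictly inside faces, never running along an edge of $G$, never passing through a vertex of $G$ — meets $C_1$ transversally and in exactly the midpoints of its red edges, using that a blue edge lies in $C_2\setminus C_1$ so its interior is disjoint from $C_1$. Hence the number $r$ of red vertices of $\kappa$ is even; the same argument with $C_2$ shows the number $b$ of blue vertices is even; and the colors alternate around the cycle $\kappa$, so $r = b$. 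Thus $|V(\kappa)| = r+b = 2r \equiv 0\pmod 4$.

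For a path component $\kappa$ of $G'$ the two endpoints are midpoints of edges of $A$ lying on the outer face, so I would close $\kappa$ into a loop $\overline{\kappa}$ by appending an arc whose interior lies in the open outer face (which is connected since $G$ is connected). The interior of this closing arc contains no edge or vertex of $G$ and so is disjoint from $C_1$, while traversing $\overline{\kappa}$ through the midpoint of a red edge — including at an endpoint of $\kappa$ — is still a transversal crossing of $C_1$. So $\overline{\kappa}$ crosses $C_1$ exactly at the red vertices of $\kappa$, whence $r$ is even; likewise $b$ is even. The colors alternate along the path $\kappa$, so $|r-b| \le 1$, and combined with $r,b$ even this forces $r = b$; hence $|V(\kappa)| = 2r \equiv 0\pmod 4$ here as well. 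Summing over all components of $G'$ yields $4\mid |A|$.

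The step needing the most care is the transversality bookkeeping: one must check that, after closing up the path components, every component is a loop meeting each $C_i$ in finitely many points, all of them transversal crossings through interiors of edges (never at vertices of $G$, never along a whole edge), and that a crossing at a red vertex is a crossing of $C_1$ but not of $C_2$ (and vice versa for blue) because $A\cap C_1$ and $A\cap C_2$ are disjoint. It is precisely this parity obstruction that automatically excludes the otherwise-worrying configurations — for instance a face with two of its edges on the outer face, one in $C_1$ and one in $C_2$ — that would make some component have an odd number of red vertices; such configurations simply cannot occur. Everything else is routine alternation-of-colors bookkeeping along paths and cycles.
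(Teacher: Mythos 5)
Your proof is correct and is essentially the paper's argument: decompose $A$ by transversal components, use alternation to get $|n_1-n_2|\le 1$, and use a crossing-parity argument with the Jordan curves $C_1$ and $C_2$ to force both counts even, hence equal, hence a total divisible by $4$. You simply make explicit (via the Jordan curve parity and the closing-up of path components through the outer face) what the paper phrases informally as ``a transversal cuts the plane graph into two parts'' and ``what goes in must come out.''
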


\begin{proof}
As you see in Figure 6, the number of edges in $A$ any transversal intersects is divisible by $4$. The three path-transversals in Figure 6 intersect $4$ edges in $A$, and the cycle-transversal intersects with $12$ edges in $A$. In the general case, note that since a transversal alternates between edges in $C_1$ and in $C_2$ by definition, the number of edges $n_1$ in $C_1$ the transversal intersects is at most $1$ off from the number of edges $n_2$ in $C_2$ the transversal intersects, that is, $|n_1-n_2| \leq 1$. But also note that $n_1$ and $n_2$ must be even; a transversal cuts the plane graph into two parts and contains all the edges in between those two parts. Then since $C_1,C_2$ are cycles we have that ``what goes in must come out," and so $n_1,n_2$ must be even. This implies $n_1 = n_2$, so that the total number of edges that the transversal intersects is $n_1 + n_2 = 2n_1$ which is divisible by $4$, as $n_1$ is even. 
\end{proof}

\begin{corollary}
The greatest common divisor of the sizes of the symmetric difference between two cycles with the same signature in $T_n$ for any $n \in \mathbb{Z}^{+}$ is $12$. In other words, $12$ divides the size of the symmetric difference between any two cycles with the same signature in $T_n$ for any $n \in \mathbb{Z}^{+}$, and there is no number strictly larger than $12$ that also satisfies this.  
\end{corollary}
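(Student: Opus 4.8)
The plan is to separate the two assertions: that $12$ divides every such symmetric-difference size, and that no larger integer does.

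For divisibility, let $C_1 \neq C_2$ be cycles with the same signature in some $T_n$ and set $A = C_1 \triangle C_2$. By the Introduction $A$ is totally even, so Remark 1 lets us write $A = A(i_1) \triangle \cdots \triangle A(i_k)$ with $k \geq 1$, and Theorem 4 gives $|A| = 12P$ with $P = \bigl(\sum_{s\text{ even}} a_s\bigr)\bigl(\sum_{s\text{ odd}} a_s\bigr) > 0$. The only factor of $P$ that can fail to be an integer is the one containing $a_k = \tfrac{n+1}{2} - i_k$, and that factor is a half-integer when $n$ is even and an integer when $n$ is odd; thus $P \in \tfrac12\mathbb{Z}_{>0}$. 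If $P \notin \mathbb{Z}$, then $|A| = 12P = 6(2m+1)$ has $2$-adic valuation $1$, contradicting Proposition 2, which gives $4 \mid |A|$. Hence $P \in \mathbb{Z}$ and $12 \mid |A|$; and $|A| = 0$ when $C_1 = C_2$.

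For optimality it is enough to exhibit pairs of distinct cycles with the same signature whose symmetric-difference sizes have greatest common divisor $12$ — since $12$ divides all of them, the overall gcd is then exactly $12$. The two cycles in Figure 1 give such a pair in $T_5$ with symmetric difference $A(2)$, of size $6(5 - 2\cdot 2 + 1)\cdot 2 = 24$ by Proposition 1. I would then produce a second pair in $T_7$ whose symmetric difference is a size-$36$ totally even subset there (such as $A(1)$ or $A(3)$), drawing the two cycles explicitly and checking face by face that their signatures agree; as $\gcd(24,36) = 12$, this finishes the proof. (If $A(2)$ in $T_4$, which has $12$ edges, is realizable as such a symmetric difference, one such example already suffices.)

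The main obstacle is this second construction. The Introduction notes that no general construction of pairs of equal-signature cycles is available, so the $T_7$ pair must be built and verified by hand; moreover not every totally even subset arises this way, so one cannot simply take a smallest one. For instance, the unique nonempty totally even subset of $T_3$ is $A(1)$, and it contains all six edges at the central vertex $(2,2)$, so for any cycle $C$ one has $\deg_{C \triangle A(1)}(2,2) = 6 - \deg_C(2,2) \in \{4,6\}$, which no cycle allows — so $T_3$ yields no realizable example of size $12$, while the divisibility argument already rules out even $n$ with $P \notin \mathbb{Z}$. Hence one really must locate a realizable totally even subset of the appropriate size and display the two cycles it splits into; confirming that such a subset (e.g.\ the chosen one in $T_7$, which has maximum degree $4$) genuinely decomposes as $C_1 \triangle C_2$ with equal signatures is where the real work sits.
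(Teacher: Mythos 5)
Your divisibility half is correct and takes a genuinely different route from the paper: you extract $12 \mid |A|$ from the counting formula of Theorem~4 (handling the half-integer $a_k$ when $n$ is even by playing it off against Proposition~2), whereas the paper gets $6 \mid |A|$ directly from the six-fold symmetry of totally even subsets (Figure~5, using Corollary~1 to rule out edges fixed by a reflection) and combines that with $4 \mid |A|$ from Proposition~2. Both arguments are sound; yours leans on the heavier Theorem~4 but has the small advantage of not needing to argue that the symmetry group acts freely on the edges of $A$.

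The optimality half, however, has a genuine gap that you yourself flag: you never actually produce the second pair of cycles. After the $T_5$ example of size $24$, everything you have shown only pins the gcd down to a divisor of $24$ that is a multiple of $12$, i.e.\ to $12$ or $24$; to finish you must exhibit a concrete pair of distinct equal-signature cycles whose symmetric difference has size $\equiv 12 \pmod{24}$, and the sentence ``I would then produce a second pair in $T_7$\dots'' is a plan, not a construction. This is not a routine verification one can wave at: as your own $T_3$ observation shows (correctly), not every totally even subset of the right size is realizable as $C_1 \triangle C_2$, so the existence of a realizable size-$36$ (or size-$60$) example is precisely the content that must be supplied. The paper closes this gap by explicitly drawing two cycles in $T_{11}$ with identical signatures whose symmetric difference is $A(4) \triangle A(5)$, of size $12\cdot(1+4)\cdot 1 = 60$, and noting $\gcd(24,60)=12$. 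Until you draw and verify your $T_7$ pair (or an analogous one), the corollary is not proved.
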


\begin{proof}
As mentioned in the Introduction, the symmetric difference of two cycles with the same signature is a totally even subset. In $T_n$, a totally even subset $A$ has a ``six-fold symmetry" as shown in Figure 5, and thus $6 \mid |A|$ and so $3 \mid |A|$ as well. If $A$ is the symmetric difference of two cycles with the same signature, then we know $4 \mid |A|$ by Proposition $2$. Thus $12 \mid |A|$. \\

To show $12$ is the \textit{greatest} common divisor, consider Figure 6, which is the symmetric difference of the two cycles with the same signature in $T_5$ in Figure 1. The symmetric difference has size $24$. We now give an example where the symmetric difference between two cycles of the same signature in $T_{11}$ is $60$. Note $\gcd(24,60) = 12$. \\

\begin{center}
\begin{tikzpicture}[declare function={a=0.6; f(\x,\y) = (\x + (\y / 2))*a ; g(\x,\y) =  (sqrt(3) * \y / 2)*a ; }]
\foreach \y in {0,...,11} 
\foreach \x in {0,...,\numexpr 11 - \y}
{
\draw ({f(\x,\y)},{g(\x,\y)}) coordinate (\x\y) ;
\filldraw[black] (\x\y) circle (2pt) ;
}

\draw (20) -- (30) ;
\draw (30) -- (31) ;
\draw (31) -- (41) ;
\draw (41) -- (50) ;
\draw (50) -- (90) ;
\draw (90) -- (81) ;
\draw (81) -- (51) ;
\draw (51) -- (42) ;
\draw (42) -- (43) ;
\draw (43) -- (63) ;
\draw (63) -- (72) ;
\draw (72) -- (92) ;
\draw (92) -- (83) ;
\draw (83) -- (73) ;
\draw (73) -- (64) ;
\draw (64) -- (65) ;
\draw (65) -- (29) ;
\draw (29) -- (28) ;
\draw (28) -- (55) ;
\draw (55) -- (54) ;
\draw (54) -- (44) ;
\draw (44) -- (26) ;
\draw (26) -- (27) ;
\draw (27) -- (09) ;
\draw (09) -- (08) ;
\draw (08) -- (17) ;
\draw (17) -- (16) ;
\draw (16) -- (06) ;
\draw (06) -- (02) ;
\draw (02) -- (12) ;
\draw (12) -- (15) -- (25) ;
\draw (25) -- (34) -- (32) -- (22) -- (20) ;
\draw (11*a,0) coordinate (b) ;

\draw (barycentric cs:00=1,01=1,10=1) node{\small 0} ;
\draw (barycentric cs:11=1,01=1,10=1) node{\small 0} ;
\draw (barycentric cs:11=1,20=1,10=1) node{\small 0} ;
\draw (barycentric cs:11=1,20=1,21=1) node{\small 1} ;
\draw (barycentric cs:30=1,20=1,21=1) node{\small 2} ;
\draw (barycentric cs:30=1,31=1,21=1) node{\small 1} ;
\draw (barycentric cs:30=1,31=1,40=1) node{\small 1} ;
\draw (barycentric cs:41=1,31=1,40=1) node{\small 1} ;
\draw (barycentric cs:41=1,50=1,40=1) node{\small 1} ;
\draw (barycentric cs:41=1,50=1,51=1) node{\small 1} ;
\draw (barycentric cs:60=1,50=1,51=1) node{\small 1} ;
\draw (barycentric cs:60=1,61=1,51=1) node{\small 1} ;
\draw (barycentric cs:60=1,61=1,70=1) node{\small 1} ;
\draw (barycentric cs:71=1,61=1,70=1) node{\small 1} ;
\draw (barycentric cs:71=1,80=1,70=1) node{\small 1} ;
\draw (barycentric cs:71=1,80=1,81=1) node{\small 1} ;
\draw (barycentric cs:90=1,80=1,81=1) node{\small 2} ;
\draw (barycentric cs:90=1,91=1,81=1) node{\small 1} ;
\draw (barycentric cs:90=1,91=1,100=1) node{\small 0} ;
\draw (barycentric cs:101=1,91=1,100=1) node{\small 0} ;
\draw (barycentric cs:101=1,b=1,100=1) node{\small 0} ;
\draw (barycentric cs:01=1,02=1,11=1) node{\small 0} ;
\draw (barycentric cs:12=1,02=1,11=1) node{\small 0} ;
\draw (barycentric cs:12=1,21=1,11=1) node{\small 0} ;
\draw (barycentric cs:12=1,21=1,22=1) node{\small 1} ;
\draw (barycentric cs:31=1,21=1,22=1) node{\small 1} ;
\draw (barycentric cs:31=1,32=1,22=1) node{\small 1} ;
\draw (barycentric cs:31=1,32=1,41=1) node{\small 1} ;
\draw (barycentric cs:42=1,32=1,41=1) node{\small 0} ;
\draw (barycentric cs:42=1,51=1,41=1) node{\small 1} ;
\draw (barycentric cs:42=1,51=1,52=1) node{\small 1} ;
\draw (barycentric cs:61=1,51=1,52=1) node{\small 1} ;
\draw (barycentric cs:61=1,62=1,52=1) node{\small 0} ;
\draw (barycentric cs:61=1,62=1,71=1) node{\small 1} ;
\draw (barycentric cs:72=1,62=1,71=1) node{\small 0} ;
\draw (barycentric cs:72=1,81=1,71=1) node{\small 1} ;
\draw (barycentric cs:72=1,81=1,82=1) node{\small 1} ;
\draw (barycentric cs:91=1,81=1,82=1) node{\small 0} ;
\draw (barycentric cs:91=1,92=1,82=1) node{\small 1} ;
\draw (barycentric cs:91=1,92=1,101=1) node{\small 0} ;
\draw (barycentric cs:02=1,03=1,12=1) node{\small 2} ;
\draw (barycentric cs:13=1,03=1,12=1) node{\small 1} ;
\draw (barycentric cs:13=1,22=1,12=1) node{\small 1} ;
\draw (barycentric cs:13=1,22=1,23=1) node{\small 0} ;
\draw (barycentric cs:32=1,22=1,23=1) node{\small 1} ;
\draw (barycentric cs:32=1,33=1,23=1) node{\small 1} ;
\draw (barycentric cs:32=1,33=1,42=1) node{\small 1} ;
\draw (barycentric cs:43=1,33=1,42=1) node{\small 1} ;
\draw (barycentric cs:43=1,52=1,42=1) node{\small 1} ;
\draw (barycentric cs:43=1,52=1,53=1) node{\small 1} ;
\draw (barycentric cs:62=1,52=1,53=1) node{\small 0} ;
\draw (barycentric cs:62=1,63=1,53=1) node{\small 1} ;
\draw (barycentric cs:62=1,63=1,72=1) node{\small 1} ;
\draw (barycentric cs:73=1,63=1,72=1) node{\small 1} ;
\draw (barycentric cs:73=1,82=1,72=1) node{\small 1} ;
\draw (barycentric cs:73=1,82=1,83=1) node{\small 1} ;
\draw (barycentric cs:92=1,82=1,83=1) node{\small 2} ;
\draw (barycentric cs:03=1,04=1,13=1) node{\small 1} ;
\draw (barycentric cs:14=1,04=1,13=1) node{\small 1} ;
\draw (barycentric cs:14=1,23=1,13=1) node{\small 1} ;
\draw (barycentric cs:14=1,23=1,24=1) node{\small 0} ;
\draw (barycentric cs:33=1,23=1,24=1) node{\small 0} ;
\draw (barycentric cs:33=1,34=1,24=1) node{\small 1} ;
\draw (barycentric cs:33=1,34=1,43=1) node{\small 1} ;
\draw (barycentric cs:44=1,34=1,43=1) node{\small 0} ;
\draw (barycentric cs:44=1,53=1,43=1) node{\small 1} ;
\draw (barycentric cs:44=1,53=1,54=1) node{\small 1} ;
\draw (barycentric cs:63=1,53=1,54=1) node{\small 1} ;
\draw (barycentric cs:63=1,64=1,54=1) node{\small 0} ;
\draw (barycentric cs:63=1,64=1,73=1) node{\small 1} ;
\draw (barycentric cs:74=1,64=1,73=1) node{\small 1} ;
\draw (barycentric cs:74=1,83=1,73=1) node{\small 0} ;
\draw (barycentric cs:04=1,05=1,14=1) node{\small 1} ;
\draw (barycentric cs:15=1,05=1,14=1) node{\small 1} ;
\draw (barycentric cs:15=1,24=1,14=1) node{\small 1} ;
\draw (barycentric cs:15=1,24=1,25=1) node{\small 1} ;
\draw (barycentric cs:34=1,24=1,25=1) node{\small 1} ;
\draw (barycentric cs:34=1,35=1,25=1) node{\small 1} ;
\draw (barycentric cs:34=1,35=1,44=1) node{\small 1} ;
\draw (barycentric cs:45=1,35=1,44=1) node{\small 1} ;
\draw (barycentric cs:45=1,54=1,44=1) node{\small 1} ;
\draw (barycentric cs:45=1,54=1,55=1) node{\small 1} ;
\draw (barycentric cs:64=1,54=1,55=1) node{\small 1} ;
\draw (barycentric cs:64=1,65=1,55=1) node{\small 1} ;
\draw (barycentric cs:64=1,65=1,74=1) node{\small 1} ;
\draw (barycentric cs:05=1,06=1,15=1) node{\small 1} ;
\draw (barycentric cs:16=1,06=1,15=1) node{\small 1} ;
\draw (barycentric cs:16=1,25=1,15=1) node{\small 1} ;
\draw (barycentric cs:16=1,25=1,26=1) node{\small 0} ;
\draw (barycentric cs:35=1,25=1,26=1) node{\small 1} ;
\draw (barycentric cs:35=1,36=1,26=1) node{\small 1} ;
\draw (barycentric cs:35=1,36=1,45=1) node{\small 0} ;
\draw (barycentric cs:46=1,36=1,45=1) node{\small 0} ;
\draw (barycentric cs:46=1,55=1,45=1) node{\small 1} ;
\draw (barycentric cs:46=1,55=1,56=1) node{\small 1} ;
\draw (barycentric cs:06=1,07=1,16=1) node{\small 1} ;
\draw (barycentric cs:17=1,07=1,16=1) node{\small 1} ;
\draw (barycentric cs:17=1,26=1,16=1) node{\small 1} ;
\draw (barycentric cs:17=1,26=1,27=1) node{\small 1} ;
\draw (barycentric cs:36=1,26=1,27=1) node{\small 1} ;
\draw (barycentric cs:36=1,37=1,27=1) node{\small 0} ;
\draw (barycentric cs:36=1,37=1,46=1) node{\small 1} ;
\draw (barycentric cs:47=1,37=1,46=1) node{\small 1} ;
\draw (barycentric cs:47=1,56=1,46=1) node{\small 1} ;
\draw (barycentric cs:07=1,08=1,17=1) node{\small 1} ;
\draw (barycentric cs:18=1,08=1,17=1) node{\small 1} ;
\draw (barycentric cs:18=1,27=1,17=1) node{\small 1} ;
\draw (barycentric cs:18=1,27=1,28=1) node{\small 1} ;
\draw (barycentric cs:37=1,27=1,28=1) node{\small 1} ;
\draw (barycentric cs:37=1,38=1,28=1) node{\small 1} ;
\draw (barycentric cs:37=1,38=1,47=1) node{\small 1} ;
\draw (barycentric cs:08=1,09=1,18=1) node{\small 1} ;
\draw (barycentric cs:19=1,09=1,18=1) node{\small 1} ;
\draw (barycentric cs:19=1,28=1,18=1) node{\small 0} ;
\draw (barycentric cs:19=1,28=1,29=1) node{\small 1} ;
\draw (barycentric cs:38=1,28=1,29=1) node{\small 1} ;
\draw (barycentric cs:09=1,010=1,19=1) node{\small 0} ;
\draw (barycentric cs:110=1,010=1,19=1) node{\small 0} ;
\draw (barycentric cs:110=1,29=1,19=1) node{\small 0} ;
\draw (barycentric cs:010=1,011=1,110=1) node{\small 0} ;

\foreach \y in {0,...,11} 
\foreach \x in {0,...,\numexpr 11 - \y}
{
\draw ({f(\x,\y)+13*a},{g(\x,\y)}) coordinate (\x\y) ;
\filldraw[black] (\x\y) circle (2pt) ;
}

\draw (20) -- (60) ;
\draw (60) -- (61) ;
\draw (61) -- (71) ;
\draw (71) -- (80) ;
\draw (80) -- (90) ;
\draw (90) -- (63) ;
\draw (63) -- (53) ;
\draw (53) -- (44) ;
\draw (44) -- (45) ;
\draw (45) -- (55) ;
\draw (55) -- (82) ;
\draw (82) -- (92) ;
\draw (92) -- (56) ;
\draw (56) -- (46) -- (37) -- (38) -- (29) -- (26) -- (35) -- (34) -- (24) -- (15) -- (18) -- (09) ;
\draw (09) -- (05) -- (14) -- (13) -- (03) -- (02) -- (32) -- (33) -- (43) -- (52) -- (51) -- (21) -- (20) ;

\draw (11*a + 13*a,0) coordinate (b) ;

\draw (barycentric cs:00=1,01=1,10=1) node{\small 0} ;
\draw (barycentric cs:11=1,01=1,10=1) node{\small 0} ;
\draw (barycentric cs:11=1,20=1,10=1) node{\small 0} ;
\draw (barycentric cs:11=1,20=1,21=1) node{\small 1} ;
\draw (barycentric cs:30=1,20=1,21=1) node{\small 2} ;
\draw (barycentric cs:30=1,31=1,21=1) node{\small 1} ;
\draw (barycentric cs:30=1,31=1,40=1) node{\small 1} ;
\draw (barycentric cs:41=1,31=1,40=1) node{\small 1} ;
\draw (barycentric cs:41=1,50=1,40=1) node{\small 1} ;
\draw (barycentric cs:41=1,50=1,51=1) node{\small 1} ;
\draw (barycentric cs:60=1,50=1,51=1) node{\small 1} ;
\draw (barycentric cs:60=1,61=1,51=1) node{\small 1} ;
\draw (barycentric cs:60=1,61=1,70=1) node{\small 1} ;
\draw (barycentric cs:71=1,61=1,70=1) node{\small 1} ;
\draw (barycentric cs:71=1,80=1,70=1) node{\small 1} ;
\draw (barycentric cs:71=1,80=1,81=1) node{\small 1} ;
\draw (barycentric cs:90=1,80=1,81=1) node{\small 2} ;
\draw (barycentric cs:90=1,91=1,81=1) node{\small 1} ;
\draw (barycentric cs:90=1,91=1,100=1) node{\small 0} ;
\draw (barycentric cs:101=1,91=1,100=1) node{\small 0} ;
\draw (barycentric cs:101=1,b=1,100=1) node{\small 0} ;
\draw (barycentric cs:01=1,02=1,11=1) node{\small 0} ;
\draw (barycentric cs:12=1,02=1,11=1) node{\small 0} ;
\draw (barycentric cs:12=1,21=1,11=1) node{\small 0} ;
\draw (barycentric cs:12=1,21=1,22=1) node{\small 1} ;
\draw (barycentric cs:31=1,21=1,22=1) node{\small 1} ;
\draw (barycentric cs:31=1,32=1,22=1) node{\small 1} ;
\draw (barycentric cs:31=1,32=1,41=1) node{\small 1} ;
\draw (barycentric cs:42=1,32=1,41=1) node{\small 0} ;
\draw (barycentric cs:42=1,51=1,41=1) node{\small 1} ;
\draw (barycentric cs:42=1,51=1,52=1) node{\small 1} ;
\draw (barycentric cs:61=1,51=1,52=1) node{\small 1} ;
\draw (barycentric cs:61=1,62=1,52=1) node{\small 0} ;
\draw (barycentric cs:61=1,62=1,71=1) node{\small 1} ;
\draw (barycentric cs:72=1,62=1,71=1) node{\small 0} ;
\draw (barycentric cs:72=1,81=1,71=1) node{\small 1} ;
\draw (barycentric cs:72=1,81=1,82=1) node{\small 1} ;
\draw (barycentric cs:91=1,81=1,82=1) node{\small 0} ;
\draw (barycentric cs:91=1,92=1,82=1) node{\small 1} ;
\draw (barycentric cs:91=1,92=1,101=1) node{\small 0} ;
\draw (barycentric cs:02=1,03=1,12=1) node{\small 2} ;
\draw (barycentric cs:13=1,03=1,12=1) node{\small 1} ;
\draw (barycentric cs:13=1,22=1,12=1) node{\small 1} ;
\draw (barycentric cs:13=1,22=1,23=1) node{\small 0} ;
\draw (barycentric cs:32=1,22=1,23=1) node{\small 1} ;
\draw (barycentric cs:32=1,33=1,23=1) node{\small 1} ;
\draw (barycentric cs:32=1,33=1,42=1) node{\small 1} ;
\draw (barycentric cs:43=1,33=1,42=1) node{\small 1} ;
\draw (barycentric cs:43=1,52=1,42=1) node{\small 1} ;
\draw (barycentric cs:43=1,52=1,53=1) node{\small 1} ;
\draw (barycentric cs:62=1,52=1,53=1) node{\small 0} ;
\draw (barycentric cs:62=1,63=1,53=1) node{\small 1} ;
\draw (barycentric cs:62=1,63=1,72=1) node{\small 1} ;
\draw (barycentric cs:73=1,63=1,72=1) node{\small 1} ;
\draw (barycentric cs:73=1,82=1,72=1) node{\small 1} ;
\draw (barycentric cs:73=1,82=1,83=1) node{\small 1} ;
\draw (barycentric cs:92=1,82=1,83=1) node{\small 2} ;
\draw (barycentric cs:03=1,04=1,13=1) node{\small 1} ;
\draw (barycentric cs:14=1,04=1,13=1) node{\small 1} ;
\draw (barycentric cs:14=1,23=1,13=1) node{\small 1} ;
\draw (barycentric cs:14=1,23=1,24=1) node{\small 0} ;
\draw (barycentric cs:33=1,23=1,24=1) node{\small 0} ;
\draw (barycentric cs:33=1,34=1,24=1) node{\small 1} ;
\draw (barycentric cs:33=1,34=1,43=1) node{\small 1} ;
\draw (barycentric cs:44=1,34=1,43=1) node{\small 0} ;
\draw (barycentric cs:44=1,53=1,43=1) node{\small 1} ;
\draw (barycentric cs:44=1,53=1,54=1) node{\small 1} ;
\draw (barycentric cs:63=1,53=1,54=1) node{\small 1} ;
\draw (barycentric cs:63=1,64=1,54=1) node{\small 0} ;
\draw (barycentric cs:63=1,64=1,73=1) node{\small 1} ;
\draw (barycentric cs:74=1,64=1,73=1) node{\small 1} ;
\draw (barycentric cs:74=1,83=1,73=1) node{\small 0} ;
\draw (barycentric cs:04=1,05=1,14=1) node{\small 1} ;
\draw (barycentric cs:15=1,05=1,14=1) node{\small 1} ;
\draw (barycentric cs:15=1,24=1,14=1) node{\small 1} ;
\draw (barycentric cs:15=1,24=1,25=1) node{\small 1} ;
\draw (barycentric cs:34=1,24=1,25=1) node{\small 1} ;
\draw (barycentric cs:34=1,35=1,25=1) node{\small 1} ;
\draw (barycentric cs:34=1,35=1,44=1) node{\small 1} ;
\draw (barycentric cs:45=1,35=1,44=1) node{\small 1} ;
\draw (barycentric cs:45=1,54=1,44=1) node{\small 1} ;
\draw (barycentric cs:45=1,54=1,55=1) node{\small 1} ;
\draw (barycentric cs:64=1,54=1,55=1) node{\small 1} ;
\draw (barycentric cs:64=1,65=1,55=1) node{\small 1} ;
\draw (barycentric cs:64=1,65=1,74=1) node{\small 1} ;
\draw (barycentric cs:05=1,06=1,15=1) node{\small 1} ;
\draw (barycentric cs:16=1,06=1,15=1) node{\small 1} ;
\draw (barycentric cs:16=1,25=1,15=1) node{\small 1} ;
\draw (barycentric cs:16=1,25=1,26=1) node{\small 0} ;
\draw (barycentric cs:35=1,25=1,26=1) node{\small 1} ;
\draw (barycentric cs:35=1,36=1,26=1) node{\small 1} ;
\draw (barycentric cs:35=1,36=1,45=1) node{\small 0} ;
\draw (barycentric cs:46=1,36=1,45=1) node{\small 0} ;
\draw (barycentric cs:46=1,55=1,45=1) node{\small 1} ;
\draw (barycentric cs:46=1,55=1,56=1) node{\small 1} ;
\draw (barycentric cs:06=1,07=1,16=1) node{\small 1} ;
\draw (barycentric cs:17=1,07=1,16=1) node{\small 1} ;
\draw (barycentric cs:17=1,26=1,16=1) node{\small 1} ;
\draw (barycentric cs:17=1,26=1,27=1) node{\small 1} ;
\draw (barycentric cs:36=1,26=1,27=1) node{\small 1} ;
\draw (barycentric cs:36=1,37=1,27=1) node{\small 0} ;
\draw (barycentric cs:36=1,37=1,46=1) node{\small 1} ;
\draw (barycentric cs:47=1,37=1,46=1) node{\small 1} ;
\draw (barycentric cs:47=1,56=1,46=1) node{\small 1} ;
\draw (barycentric cs:07=1,08=1,17=1) node{\small 1} ;
\draw (barycentric cs:18=1,08=1,17=1) node{\small 1} ;
\draw (barycentric cs:18=1,27=1,17=1) node{\small 1} ;
\draw (barycentric cs:18=1,27=1,28=1) node{\small 1} ;
\draw (barycentric cs:37=1,27=1,28=1) node{\small 1} ;
\draw (barycentric cs:37=1,38=1,28=1) node{\small 1} ;
\draw (barycentric cs:37=1,38=1,47=1) node{\small 1} ;
\draw (barycentric cs:08=1,09=1,18=1) node{\small 1} ;
\draw (barycentric cs:19=1,09=1,18=1) node{\small 1} ;
\draw (barycentric cs:19=1,28=1,18=1) node{\small 0} ;
\draw (barycentric cs:19=1,28=1,29=1) node{\small 1} ;
\draw (barycentric cs:38=1,28=1,29=1) node{\small 1} ;
\draw (barycentric cs:09=1,010=1,19=1) node{\small 0} ;
\draw (barycentric cs:110=1,010=1,19=1) node{\small 0} ;
\draw (barycentric cs:110=1,29=1,19=1) node{\small 0} ;
\draw (barycentric cs:010=1,011=1,110=1) node{\small 0} ;

\end{tikzpicture} 
\end{center}

The symmetric difference of the two cycles is $A(4) \triangle A(5)$, shown below. \\

\begin{center}
\begin{tikzpicture}[declare function={a=0.6; f(\x,\y) = (\x + (\y / 2))*a ; g(\x,\y) =  (sqrt(3) * \y / 2)*a ; }]

\foreach \y in {0,...,11} 
\foreach \x in {0,...,\numexpr 11 - \y}
{
\draw ({f(\x,\y)},{g(\x,\y)}) coordinate (\x\y) ;
\filldraw[black] ({f(\x,\y)},{g(\x,\y)}) circle (1pt) ;
}

\draw (30) -- (50) -- (41) -- (61) -- (60) -- (80) -- (71) -- (81) -- (72) ;
\draw (72) -- (82) -- (73) -- (83) -- (65) ;
\draw (65) -- (64) -- (46) -- (56) ;
\draw (56) -- (38) -- (37) -- (28) -- (27) -- (18) -- (17) -- (08) -- (06) ;
\draw (06) -- (16) -- (14) -- (05) -- (03) -- (13) ;
\draw (13) -- (12) -- (22) -- (21) -- (31) -- (30) ;
\draw (51) -- (42) -- (43) -- (52) -- (51) ;
\draw (33) -- (53) -- (35) -- (33) ;
\draw (44) -- (54) -- (55) -- (45) -- (44) ;
\draw (15) -- (25) -- (34) -- (24) -- (15) ;
\end{tikzpicture}
\end{center}

\noindent
By Theorem 4, $A(4) \triangle A(5)$ has $12 \cdot (1 + 4) \cdot (1) = 60$ edges. 
\end{proof}

\begin{proposition}
Let $A = A(i_1) \triangle A(i_2) \triangle \cdots \triangle A(i_k)$ in $T_n$. If $i_1$ is odd, then $A$ is not the symmetric difference of two cycles with the same signature.
\end{proposition}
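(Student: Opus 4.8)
The plan is to contradict Proposition~2 -- more precisely, the statement established in its proof -- by exhibiting a single transversal of $A$ that meets exactly $2i_1$ edges of $A$. Since $i_1$ is odd, $2i_1\equiv 2\pmod 4$, so $A$ cannot be the symmetric difference of two cycles with the same signature.

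First I would fix the transversal to track. Since $A=A(i_1)\triangle\cdots\triangle A(i_k)$, the definition of $A(i_s)$ together with Theorem~1 shows that the edges of $A$ on the bottom side are exactly $\{(i_s,1),(i_s+1,1)\}$ for $s\in[k]$ together with their mirror images across the middle; in particular $e_0:=\{(i_1,1),(i_1+1,1)\}\in A$, and $e_0$ is an edge on the bottom side of $T_n$, hence lies in a unique finite face. So the transversal $\tau$ through $e_0$ is a path, one of whose endpoints is the midpoint of $e_0$.

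Second -- the heart of the argument -- I would trace $\tau$ and show it ``staircases'' along the line $x+y=i_1+1$: for $h=1,2,\dots,i_1$ it crosses, in this order, the horizontal edge $\{(i_1{+}1{-}h,h),(i_1{+}2{-}h,h)\}$ and then the edge $\{(i_1{+}1{-}h,h),(i_1{+}1{-}h,h{+}1)\}$ (so that $e_0$ is the $h=1$ horizontal edge), ending at the left-side edge $\{(1,i_1),(1,i_1+1)\}$, which is again a side edge; this is $2i_1$ edges of $A$ in all. Every edge named here has its two endpoints on the lines $x+y=i_1+1$ and $x+y=i_1+2$, so the whole of $\tau$ lies in the small triangle $\{x+y\le i_1+2\}$ at the bottom-left corner, and it therefore suffices to understand $A$ there. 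For $A(i_1)$ this can be read off from the explicit decompositions $A(i_1)=A_1\triangle A_2\triangle A_3=B_1\triangle\cdots\triangle B_6$ in the proofs of Theorem~3 and Proposition~1: the constituent pieces touch this corner triangle only along the line $x+y=i_1+1$, and there the edges of $A(i_1)$ are precisely the staircase edges together with $e_0$ and $\{(1,i_1),(1,i_1+1)\}$, which forces $\tau$ along them. For general $A$ I would then argue that no $A(i_s)$ with $s\ge 2$ changes the picture near $\tau$: each of the six pieces of $A(i_s)$ is supported on $\{x+y\ge i_s+1\}$ after a suitable rotation or reflection (Theorem~1 and its analogues for the other two sides), and since $i_s\ge i_1+1$ its only possible incursion into $\{x+y\le i_1+2\}$, along $x+y=i_1+2$, consists of edges of the form $\{(x,y),(x,y+1)\}$, whereas the faces met by $\tau$ are bounded on that line only by edges of the form $\{(x,y),(x-1,y+1)\}$. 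Hence $A$ agrees with $A(i_1)$ in a neighbourhood of $\tau$, and $\tau$ meets $2i_1$ edges of $A$.

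Finally, the proof of Proposition~2 shows that every transversal of the symmetric difference of two cycles with the same signature meets a number of edges of $A$ divisible by $4$; but $\tau$ meets $2i_1\equiv 2\pmod 4$ of them, a contradiction, which proves the proposition. The main obstacle is the second step -- making the staircase trace rigorous, and in particular excluding interference from the $A(i_s)$ with $s\ge 2$, where the delicate case is $i_2=i_1+1$ (in which $A(i_2)$ comes closest to $\tau$) and the corner where the staircase reaches the left side.
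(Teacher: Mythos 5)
Your proposal is correct and follows essentially the same route as the paper: both identify the zig-zag of $2i_1$ edges along the line $x+y=i_1+1$ (the paper's sets $E_1\cup E_2$), argue via the $R_1,R_2,R_3$ construction of Theorem~3 that no $A(i_s)$ with $s>1$ interferes so that this zig-zag is exactly the trace of one transversal, and then contradict the divisibility-by-$4$ fact from the proof of Proposition~2 since $2i_1\equiv 2\pmod 4$. Your write-up is somewhat more explicit about forcing the transversal along the staircase, but the underlying argument is identical.
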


\begin{proof}
Consider the zig-zag of edges $\{(i_1,1),(i_1+1,1)\}, \{(i_1+1,1),(i_1,2)\}, \{(i_1,2),(i_1+1,2)\},\{(i_1+1,2),(i_1,3)\}$ and so on, that is, the set of edges $E_1 \cup E_2$ where

$$E_1 = \{\{(x,y),(x+1,y)\}\} : x + y = i_1 + 1\}$$

$$E_2 = \{\{(x,y),(x,y+1)\} : x + y = i_1 + 1\}.$$

Note $E_1 \cup E_2 \subseteq A \cap A(i_1)$, because for $s > 1$ we have $A(i_s) \cap (E_1 \cup E_2) = \{\}$ since $i_s > i_1$, which can be seen by the construction with the three regions $R_1,R_2,R_3$ given in Theorem 3. If $A$ is the symmetric difference between two cycles of the same signature, then there is a transversal that intersects all and only the edges in $E_1 \cup E_2$ ($E_1 \cup E_2$ forms a zig-zag path). Note $|E_1 \cup E_2| = 2i_1$, and since $i_1$ is odd we have $4 \nmid 2i_1$. Thus the number of edges the transversal intersects is not divisible by $4$, contradicting the proof of Proposition 2.
\end{proof}

\begin{center}
\section{Concluding Remarks}
\end{center}

There are some key statements proved by Beluhov \cite{bel} about rectangular grids with square cells that we couldn't find and prove an analogue of for triangular grids. For example, we wanted to prove that given a pair of distinct cycles with the same signature in $T_n$, we can construct a pair of distinct cycles with the same signature in $T_{mn}$ where $m \in \mathbb{Z}^{+}$. However there were obstacles that prevented us from doing so, and now we doubt whether the claim is even true for triangular grids (something similar is true for rectangular grids with square cells, which Beluhov proved). But we do think there should be a way to construct larger pairs of distinct cycles with the same signature from smaller pairs in triangular grids, but we just haven't found it. Also, we were able to prove the following analogue of a key lemma Beluhov used to prove the statement above (for rectangular grids). \\

\begin{remark}
If $C_1$ and $C_2$ have the same signature in $T_n$, then if a finite face has two edges of $C_1 \triangle C_2$, then one of these two edges must belong to $C_1 \setminus C_2$ and the other must belong to $C_2 \setminus C_1$.
\end{remark}

\begin{lemma}
Let $C_1$ and $C_2$ be two distinct cycles with the same signature in $T_n$. Then there exists two distinct cycles $C_1'$ and $C_2'$ with the same signature in $T_n$, such that $C_1'$ and $C_2'$ share at least one edge on each of the three sides of $T_n$. 
\end{lemma}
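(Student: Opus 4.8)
The plan is to show that the given pair $C_1,C_2$ already works away from the corners of $T_n$, and to repair it near a corner by local surgery. Set $A:=C_1\triangle C_2$; it is a nonempty totally even subset, so by the results of Section~2 we may write $A=A(i_1)\triangle\cdots\triangle A(i_k)$ with $i_1<\cdots<i_k\le n/2$ and $k\ge 1$. Since $A$ is the symmetric difference of two cycles with the same signature, Proposition~3 forces $i_1$ to be even, so $i_1\ge 2$. Hence the bottom-side edges of $A$ occur only at positions $i_1,\dots,i_k$ and their mirror positions $n+1-i_1,\dots,n+1-i_k$, all lying in $\{2,\dots,n-1\}$; in particular $\{(1,1),(2,1)\}\notin A$. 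Because the corner $(1,1)$ has degree $2$ in $T_n$ and $A$ is totally even, the other edge $\{(1,1),(1,2)\}$ at $(1,1)$ is also not in $A$; and since the small corner face $\Delta$ with vertices $(1,1),(2,1),(1,2)$ contains an even number of edges of $A$, its third edge $d=\{(2,1),(1,2)\}$ is not in $A$ either. Thus $C_1$ and $C_2$ agree on all three edges of $\Delta$, and by the $120^\circ$ rotational symmetry of totally even subsets (Figure~5) the same holds at the other two corners.

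Call a corner $v$ \emph{occupied} by a cycle $C$ if $C$ has degree $2$ at $v$, i.e.\ $C$ contains both boundary edges meeting at $v$. By the previous paragraph a corner is occupied by $C_1$ iff it is occupied by $C_2$, and then $C_1\cap C_2$ contains an edge on each of the two sides through $v$. Since any two sides of $T_n$ meet at a distinct corner, any two occupied corners cover all three sides, and then we are done with $C_1':=C_1$, $C_2':=C_2$. Otherwise pick an unoccupied corner $v$ lying on a side not yet covered; after relabelling say $v=(1,1)$, so $\{(1,1),(2,1)\},\{(1,1),(1,2)\}\notin C_1$ and $C_1\cap\Delta\subseteq\{d\}$.

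If $d\in C_1$ (hence $d\in C_2$), replace each $C_i$ by $\tilde C_i:=C_i\triangle\{\,\{(1,1),(2,1)\},\{(1,1),(1,2)\},d\,\}$: this reroutes the single edge $d$ of each cycle through the previously unused vertex $(1,1)$, so $\tilde C_1,\tilde C_2$ are again simple cycles; they are distinct since $\tilde C_1\triangle\tilde C_2=A\ne\emptyset$; and they still have the same signature, because the only finite faces whose count changes are $\Delta$ (count $1\mapsto 2$ for both) and the face across $d$ (changing by the same $\pm1$ for both, as $d$ lay in both), the two remaining edges moved lying on the outer boundary. Now $(1,1)$ is occupied and no other corner's status has changed ($\Delta$ is disjoint from the other corner faces), so we repeat the previous step. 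Iterating, we either reach a configuration with two occupied corners (done) or we get stuck because the needed unoccupied corner $v$ has $C_1\cap\Delta=C_2\cap\Delta=\emptyset$.

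This stuck case is the crux: a one-triangle flip would now disconnect the cycles, so one must flip a larger region at $v$. The plan is to flip the region $U$ bounded by the transversal zig-zag $P$ running from the bottom edge of $A$ at position $i_1$ to the left edge of $A$ at position $i_1$, together with the boundary edges of $T_n$ at positions $1,\dots,i_1-1$ on the bottom and left sides; the edges of $P$ alternate between $C_1\setminus C_2$ and $C_2\setminus C_1$ by the preceding Remark and the parity argument in the proof of Proposition~2. One then wants to show that $C_i\triangle\partial U$ are again simple cycles with the same signature that run along the boundary near $v$, and that no already-covered side is undone. Proving that these modified cycles stay connected and simple — equivalently, controlling exactly how $C_1$ and $C_2$ thread through the corner region $U$ — is the main obstacle; it is governed by the explicit form of $A=A(i_1)\triangle\cdots$ near $v$ and by the preceding Remark, but is delicate. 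A cleaner alternative that would sidestep this case is to show directly that whenever a side is uncovered one necessarily has $d\in C_1$, by following the transversal out of the bottom edge of $A$ at position $i_1$ and tracking how $C_1$ turns at the successive vertices of the zig-zag (when $i_1=2$ this already forces $d\in C_1\cap C_2$); I expect some version of this local bookkeeping to be the shortest route, but it is the part that needs real work.
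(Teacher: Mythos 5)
Your proposal has a genuine gap, and you identify it yourself: the ``stuck case,'' where an uncovered side has both of its corner faces disjoint from $C_1\cup C_2$, is left unresolved, and the sketched repair (flipping a large region $U$ bounded by the transversal zig-zag, or alternatively arguing that $d\in C_1$ is forced) is exactly the part that carries the content of the lemma. The second alternative is in fact false in general: when $i_1>2$ nothing forces the corner diagonal $d=\{(2,1),(1,2)\}$ into the cycles, so the iteration of one-face corner flips really can get stuck. The root of the difficulty is strategic: you are trying to prove something stronger than the lemma asks, namely that the cycles can be rerouted to \emph{occupy a corner}. That requires propagating the cycles across the whole empty triangular region between the corner and the support of $A$, which is why you end up needing to control a large flip.

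The paper avoids this entirely by aiming only for a shared edge somewhere on the side, not at the corner. Let $\{(i,1),(i+1,1)\}$ be the leftmost edge of $C_1\cup C_2$ on the bottom side, say in $C_1$ (if the two cycles already share a bottom edge there is nothing to do, and if neither has a bottom edge then $A$ is a totally even subset with empty bottom side, hence $A=\emptyset$, contradicting $C_1\ne C_2$). Since $A=A(i)\triangle\cdots$, the face $(i,1),(i+1,1),(i,2)$ contains the two $A$-edges $\{(i,1),(i+1,1)\}$ and $\{(i,1),(i,2)\}$, so by the Remark the latter lies in $C_2\setminus C_1$; then the degree-two condition at the vertex $(i,1)$, together with the minimality of $i$, forces the up-left diagonal $\{(i,1),(i-1,2)\}$ to lie in $C_1\cap C_2$. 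Taking the minimal $j$ with $\{(j,1),(j-1,2)\}\in C_1\cap C_2$ and flipping the single face with vertices $(j-1,1),(j,1),(j-1,2)$ in \emph{both} cycles produces cycles sharing the bottom edge $\{(j-1,1),(j,1)\}$; because the flipped edge is common to both cycles, simplicity, distinctness, and equality of signatures are immediate, which is precisely what your large-region flip could not deliver. Your first two paragraphs (the parity of $i_1$ via Proposition~3, the cleanliness of the corner faces, and the one-face flip when $d\in C_1\cap C_2$) are correct but only recover the easy instance $i_1=2$ of this argument.
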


\begin{proof}
If $C_1$ and $C_2$ already share edges on each side of $T_n$, we are done. If both $C_1$ and $C_2$ contain no edges on a certain side, and without loss of generality let it be the bottom side. Then $C_1 = C_2$, as $C_1 \triangle C_2$ is a totally even subset with no edges on the bottom side, and hence $C_1 \triangle C_2 = \{\}$ by Remark 1. Now without loss of generality, assume $C_1$ and $C_2$ do not share edges on the bottom side, and at least one of $C_1$ or $C_2$ contains an edge of the bottom side. We construct distinct $C_1'$ and $C_2'$ with the same signature that shares an edge on the bottom side. There is analogous construction for the left and right sides. Let $\{(i,1),(i+1,1)\}$ be the left-most (minimal $i$) edge of $C_1 \cup C_2$ on the bottom side, and without loss of generality assume it belongs to $C_1$. Let $A = C_1 \triangle C_2$, so $A$ is totally even. Since $\{(i,1),(i+1,1)\} \in A$ is the leftmost edge of the bottom side of $A$, we have $A = A(i) \triangle \cdots$ by Remark 1. \\

Recall the zig-zag path in Proposition 3 with edge set $E_1 \cup E_2$ where $i_1 = i$. Then $E_1 \cup E_2 \subseteq A$, which can be seen by the construction with the three regions $R_1,R_2,R_3$ given in Theorem 3. From the zig-zag path $E_1 \cup E_2$ and Remark 2, we know that $\{(i+1,1),(i,2)\} \in C_2 \setminus C_1$. If $i = 1$ we get a contradiction, as the vertex $(1,1)$ would be incident to exactly one edge of $C_2$, so we may assume $i > 1$. Note that since $\{(i,1),(i+1,1)\}$ is the left-most edge on the bottom side of $A = C_1 \triangle C_2$, we have $A$ does not contain any edges in the equilateral triangular region with vertices $(1,1),(i,1),(1,i)$, and this can be seen by the construction with the three regions $R_1,R_2,R_3$ given in Theorem 3. Note that we must have $\{(i,1),(i-1,2)\} \in C_1 \cap C_2$ because $C_1$ and $C_2$ are cycles and they cannot contain the edge $\{(i-1,1),(i,1)\}$, because we assumed $\{(i,1),(i+1,1)\}$ was the left-most edge. Consider the minimum $j$ such that $\{(j,1), (j-1,2)\} \in C_1 \cap C_2$, which exists because $\{(i,1),(i-1,2)\} \in C_1 \cap C_2$. We can delete the edge $\{(j,1), (j-1,2)\}$ from both $C_1$ and $C_2$ and replace them with the two edges $\{(j-1,1),(j,1)\}$ and $\{(j-1,1),(j-1,2)\}$ in both $C_1$ and $C_2$ if they're not already in there. This gives us two cycles $C_1',C_2'$ that have the same signature and share the edge $\{(j-1,1),(j,1)\}$ on the bottom side. 
\end{proof}

Ultimately we wanted to completely determine for which $n$ does $T_n$ have a pair of distinct cycles with the same signature in $T_n$, which Beluhov was able to do for rectangular grids, and which we were not able to achieve.

\section{Conflict of Interest}

On behalf of all authors, the corresponding author states that there is no conflict of interest. \\

\section{Data Availability Statement}

No data are associated with this article. \\

\begin{center}
\section{Acknowledgments}
\end{center}

This research was conducted at the University of Minnesota Duluth REU and was supported by Jane Street Capital, NSF-DMS (Grant 1949884), Ray Sidney, and Eric Wepsic. We thank Joe Gallian, Colin Defant, Noah Kravitz, Maya Sankar, Mitchell Lee, and Katalin Berlow for feedback.  

\bibliographystyle{plain}
\bibliography{refs} 

\noindent
Department of Mathematics, Carnegie Mellon University, Pittsburgh, PA 15213, USA 
\end{document}